\documentclass[12pt, reqno]{amsart}
\usepackage{amsmath, amsthm, amscd, amsfonts, amssymb, graphicx, color, mathrsfs}
\usepackage[bookmarksnumbered, colorlinks, plainpages]{hyperref}
\usepackage{cite}
\usepackage[all]{xy}
\usepackage{slashed}
\usepackage{tikz-cd}
\usepackage{mathabx}
\usepackage{tipa}
\usepackage{soul}
\usepackage{cancel}
\usepackage{ulem}
\usepackage{algorithm}
\usepackage{algpseudocode}
\usepackage{enumitem}
\usepackage{multirow}
\usepackage{float}
\usepackage{verbatim}
\usepackage{amssymb}
\usepackage{extpfeil}

\usepackage{pstricks, pst-plot}
\usepackage{pictex,dcpic}
\usepackage{tikz}
\usepackage{tikz-cd}

\DeclareMathOperator{\Ad}{Ad}
\DeclareMathOperator{\ad}{ad}

\allowdisplaybreaks

\newlength\tindent
\newtheorem{theorem}{Theorem}[section]

\newtheorem{proposition}[theorem]{Proposition}
\newtheorem{corollary}[theorem]{Corollary}

\theoremstyle{definition}
\newtheorem{definition}[theorem]{Definition}

\theoremstyle{remark}
\newtheorem{remark}[theorem]{Remark}
\numberwithin{equation}{section}

\begin{document}
\setcounter{page}{1}

\title[Equigeodesic vectors for homogeneous Riemannian submersions]{Equigeodesic vectors for homogeneous Riemannian submersions}

\author[N. Pereira da Silva]{Neiton Pereira da Silva}
\address{Neiton Pereira da Silva \endgraf 
Universidade Federal de Uberlândia, Instituto de Matemática e Estatística\\ Avenida João Naves de Ávila, 2121\\ CEP 38400-902, Uberlândia - MG, Brazil. \endgraf
{\it E-mail address:} {\rm neiton@ufu.br}
 }
 
\author[B. Grajales]{Brian Grajales}
\address{Brian Grajales \endgraf
Universidade Estadual de Maringá, Departamento de Matem\'{a}tica, Avenida Colombo, 5790, Campus Universitário, 87020-900, Maringá - PR, Brazil 
\endgraf
  {\it E-mail address:} {\rm bdgtriana@uem.br}
  }

\author[L. Grama]{Lino Grama}
\address{Lino Grama \endgraf
IMECC-Unicamp, Departamento de Matem\'{a}tica. Rua S\'{e}rgio Buarque de Holanda \endgraf 651, Cidade Universit\'{a}ria Zeferino Vaz. 13083-859, Campinas - SP, Brazil
\endgraf
{\it E-mail address:} {\rm lino@ime.unicamp.br}
  }


\keywords{Equigeodesics, homogeneous geodesics, Riemannian submersions}
\subjclass[2020]{53C30, 53C22}

\begin{abstract} We study $\pi$-equigeodesic vectors associated with homogeneous fibrations, namely vectors that are geodesic with respect to every homogeneous metric making the projection a Riemannian submersion. We obtain an algebraic criterion characterizing such vectors and apply it to classical flag manifolds and Ledger-Obata spaces. As a framework for this study, given Lie groups $K\subseteq H\subseteq G$ with $H$ and $K$ closed in $G$, and a fixed $G$-invariant metric $g_b$ on $G/H$, we describe the family of $G$-invariant metrics $g$ on $G/K$ for which the natural projection $\pi:(G/K,g)\to(G/H,g_b)$ is a Riemannian submersion. We also give a criterion for the fibers of $\pi$ to be totally geodesic.
\end{abstract}
\maketitle

\allowdisplaybreaks
\section{Introduction}
The study of geodesics is a basic problem in Riemannian geometry. In a general Riemannian manifold, geodesics are described by a second-order differential equation. When the manifold is a homogeneous space, its symmetries allow one to relate this equation to the Lie algebra of the group acting on it. More precisely, if $M=G/K$ is a reductive homogeneous space and the Riemannian metric on $M$ is $G$-invariant, then geodesics which are orbits of one-parameter subgroups of $G$ can be characterized by algebraic conditions on vectors in the Lie algebra. These curves are called {\it homogeneous geodesics}. Equivalently, a vector $X$ in the Lie algebra of $G$ is called a geodesic vector if the curve $t\mapsto \exp(tX)K$ is a geodesic in $G/K$. The existence and classification of homogeneous geodesics have been studied in several classes of homogeneous spaces. In particular, geodesic orbit spaces, for which every geodesic is homogeneous, were studied by Kowalski and Vanhecke in low dimensions \cite{KV}, and later in different families of compact homogeneous spaces and flag manifolds \cite{AlArv,Al,AN,AWZ,Arv,Souris2018}.\\

A related notion is that of an {\it equigeodesic vector}. A vector $X$ in the Lie algebra of $G$ is called equigeodesic if it is a geodesic vector for every $G$-invariant metric on $G/K$. This notion was introduced by Cohen, Grama and Negreiros in \cite{CGN}, where equigeodesics on generalized flag manifolds of type $\mathcal{A}$ were studied. Since then, equigeodesic vectors have been studied in several families of homogeneous spaces, including generalized flag manifolds with two isotropy summands \cite{GN}, with second Betti number one \cite{WZ}, with four isotropy summands \cite{X}, and with five isotropy summands \cite{XT2}, as well as flag manifolds with $G_2$-type $\mathfrak{t}$-roots \cite{S2}.\\


Equigeodesics have also been investigated in other classes of homogeneous spaces, such as Stiefel manifolds and generalized Wallach spaces \cite{S}, as well as in the Finsler setting \cite{XT}. More recently, compact homogeneous spaces with equivalent isotropy summands were considered in \cite{GG}, where the presence of equivalent summands was incorporated into the algebraic criterion for equigeodesic vectors.\\


In this paper, we study homogeneous geodesics in the setting of homogeneous fibrations. Given a Lie group $G$ and closed subgroups $K \subseteq H$, there is a natural smooth fibration
\[
H/K \longrightarrow G/K \stackrel{\pi}{\longrightarrow} G/H.
\]
We fix a $G$-invariant Riemannian metric $g_b$ on the base space $G/H$. Assuming that $G/K$ and $G/H$ are reductive, we choose reductive decompositions $\mathfrak{g}=\mathfrak{h}\oplus\mathfrak{n},\ \mathfrak{h}=\mathfrak{k}\oplus\mathfrak{p}.$ Then $\mathfrak{g}=\mathfrak{k}\oplus\mathfrak{m},$ where $\ \mathfrak{m}=\mathfrak{p}\oplus\mathfrak{n},$ is a reductive decomposition for $G/K$. Hence any $G$-invariant metric on $G/K$ is determined by an $\operatorname{Ad}(K)$-invariant inner product on $\mathfrak{m}$. We characterize the $G$-invariant metrics $g$ on $G/K$ for which
\[
\pi:(G/K,g)\longrightarrow (G/H,g_b)
\]
is a Riemannian submersion. We show that these metrics are parametrized by pairs $(g_f,L)$, where $g_f$ is an $H$-invariant metric on the fiber $H/K$ and $L$ is a $K$-equivariant linear map from $\mathfrak{n}$ to $\mathfrak{p}$. We also give necessary and sufficient conditions on $(g_f,L)$ for the fibers of $\pi$ to be totally geodesic; see Theorem~\ref{theorem:characterization:Riemannian:submersions:totally:geodesic}. We then introduce the notion of a $\pi$-equigeodesic vector. A vector $X\in\mathfrak{m}$ is called $\pi$-equigeodesic with respect to $g_b$ if it is a geodesic vector for every $G$-invariant metric $g$ on $G/K$ such that $\pi:(G/K,g)\to(G/H,g_b)$ is a Riemannian submersion. When $G$ is compact, we write $X=X_{\mathfrak{p}}+X_{\mathfrak{n}}$ according to the decomposition $\mathfrak{m}=\mathfrak{p}\oplus\mathfrak{n}$ and obtain an algebraic criterion for deciding whether $X$ is $\pi$-equigeodesic with respect to $g_b$; see Theorem~\ref{theorem:pi-equigeodesic:criterion}. As an immediate consequence, the fiber component of every $\pi$-equigeodesic vector is an equigeodesic vector on the fiber $H/K$; see Remark~\ref{remark:consequences:criterion}.\\

We apply this criterion to several homogeneous fibrations. First, we consider fibrations whose base and total spaces are classical flag manifolds of $\operatorname{SO}(n)$, $\operatorname{SU}(n)$, and $\operatorname{Sp}(n)$, with $n\geq 5$. We prove that every $\pi$-equigeodesic vector has the form $X_{\mathfrak{p}}+X_{\mathfrak{n}},$ where $X_{\mathfrak{p}}$ is an equigeodesic vector on the fiber and $X_{\mathfrak{n}}$ is a geodesic vector with respect to $g_b$. We also study a homogeneous fibration of $\operatorname{SO}(4)$ for which $\operatorname{Hom}_K(\mathfrak{n},\mathfrak{p})\neq \{0\}$ and the base space has equivalent isotropy summands. Finally, we determine the $\pi$-equigeodesic vectors for a fibration of the Ledger-Obata space $S^{r+1}/\Delta S,$ where $S$ is a simple compact connected Lie group and $r\geq 2.$\\

The paper is organized as follows. In Section~2, we recall the notion of homogeneous geodesic and the algebraic criteria for geodesic and equigeodesic vectors on reductive homogeneous spaces. In Section~3, we study homogeneous Riemannian submersions associated with triples $K\subseteq H\subseteq G$. We characterize the invariant metrics on $G/K$ for which the natural projection onto $G/H$ is a Riemannian submersion, and we give a criterion for the fibers to be totally geodesic. In Section~4, we introduce $\pi$-equigeodesic vectors and obtain an algebraic criterion in terms of the fiber and base components. In Section~5, we apply this criterion to homogeneous fibrations involving classical flag manifolds, a flag manifold of $\operatorname{SO}(4)$ and Ledger-Obata spaces.

\section{Homogeneous geodesics}
Let $G$ be a Lie group, let $K$ be a closed subgroup of $G$ and denote by $\mathfrak{g}$ and $\mathfrak{k}$ their corresponding Lie algebras. Assume that $G/K$ is reductive and fix a reductive decomposition 
\[
\mathfrak{g}=\mathfrak{k}\oplus\mathfrak{m},\ \textnormal{with}\ \mathrm{Ad}(k)\mathfrak{m}=\mathfrak{m},\ \forall k\in K.
\]
Each $X\in\mathfrak{g}$ induces a smooth vector field $X^*$ on $G/K$ defined by 
\begin{equation*}
    X^*(aK):=\left.\frac{d}{dt}\exp(tX)aK\,\right|_{t=0}\ (a\in G),
\end{equation*}
where $\exp:\mathfrak{g}\to G$ is the exponential map of $G.$ In this setting, $\mathfrak{m}$ is identified with the tangent space $T_{eK}(G/K)$ at the identity class via 
$$\mathfrak{m}\ni X\longmapsto X^{*}(eK)\in T_{eK}(G/K).$$
A Riemannian metric $g$ on $G/K$ is said to be $G$-invariant (or $G$-homogeneous) if
\[
g_{aK}((\mathrm{Ad}(a)X)^*(aK),(\mathrm{Ad}(a)Y)^*(aK))=g_{eK}(X^*(eK),Y^*(eK)),
\]
for all $a\in G$ and $X,Y\in\mathfrak{m}.$ It is a well-known fact that the set of $G$-invariant metrics on $G/K$ is in one-to-one correspondence with the set of $\mathrm{Ad}(K)$-invariant inner products on $\mathfrak{m},$ that is, inner products for which $\mathrm{Ad}(k)$ is a linear isometry for each $k\in K.$  Moreover, the correspondence is given by $g\longmapsto\langle\cdot,\cdot\rangle_g,$ where 
\[
\langle X,Y\rangle_g:=g_{eK}(X^{*}(eK),Y^*(eK)).
\]
\begin{definition}
Let $g$ be a $G$-invariant metric on $G/K$. A smooth curve $\gamma:\mathbb{R}\to G/K$ is called {\it homogeneous} if there exist $X\in\mathfrak g$ and $a\in G$ such that
\[
\gamma(t)=\exp(tX)aK.
\]
If, in addition, $\gamma$ is a geodesic in $(G/K,g)$, then \(\gamma\) is called a {\it homogeneous geodesic} with respect to $g.$ A vector $X\in\mathfrak{g}$ is called a {\it geodesic vector} with respect to $g$ if the curve $$\gamma_X(t):=\exp(tX)K$$ is a homogeneous geodesic in $(G/K,g).$ If $X\in\mathfrak{g}$ is a geodesic vector with respect to every $G$-invariant metric on $G/K,$ then $X$ is called an {\it equigeodesic vector.} In this case, $\gamma_X$ is called a {\it homogeneous equigeodesic.}
\end{definition}
\begin{remark}
If $g$ is a $G$-invariant metric on $G/K$, then, for every $a\in G$, the map $\phi_a:G/K\to G/K,\ \phi_a(xK):=axK,$
is an isometry. Moreover, if $\gamma(t)=\exp(tX)aK$ is a homogeneous curve through $aK,$ then $(\phi_{a^{-1}}\circ\gamma)(t)=\exp(t\mathrm{Ad}(a^{-1})X)K.$ Therefore, $\gamma$ is a homogeneous geodesic if and only if the curve $t\mapsto \exp(t\operatorname{Ad}(a^{-1})X)K$ is a homogeneous geodesic through the identity class. Thus, the study of homogeneous geodesics reduces to the study of geodesic vectors.
\end{remark}
For all $X\in\mathfrak g,$ denote by $X_\mathfrak{k}$ and $X_\mathfrak{m}$ the components of $X$ in $\mathfrak{k}$ and $\mathfrak{m},$ respectively. The following theorem, proved by Kowalski and Vanhecke in \cite[Corollary 2.2]{KV}, provides a characterization of geodesic vectors.

\begin{theorem}\label{KV:theorem}
A vector $X\in\mathfrak{g}$ is geodesic with respect to the $G$-invariant metric $g$ if and only if
\begin{equation}\label{KV:criterion}
\langle [X,Y]_\mathfrak{m},X_\mathfrak{m}\rangle_g=0
\end{equation}
for all $Y\in\mathfrak{m}$.
\end{theorem}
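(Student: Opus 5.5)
The plan is to compute the geodesic equation for $\gamma_X(t)=\exp(tX)K$ directly and compare it with the Levi-Civita connection of the invariant metric. Write $g$ also for the corresponding $\mathrm{Ad}(K)$-invariant inner product on $\mathfrak m$, and recall the standard fact that $\gamma_X$ is a geodesic if and only if it is a geodesic at $t=0$. Indeed, $\gamma_X(t)=\phi_{\exp(tX)}(eK)$, and $\phi_{\exp(tX)}$ is an isometry carrying $\gamma_X$ to the shifted curve $s\mapsto\gamma_X(s+t)$. So the acceleration $\nabla_{\dot\gamma}\dot\gamma$ at time $t$ is the image under $d\phi_{\exp(tX)}$ of the acceleration at $t=0$. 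It therefore suffices to show that $\nabla_{\dot\gamma}\dot\gamma(0)=0$ if and only if \eqref{KV:criterion} holds.

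Next I observe that $\dot\gamma_X(t)=X^*(\gamma_X(t))$, so $\gamma_X$ is an integral curve of the Killing field $X^*$ (it is Killing because its flow $\phi_{\exp(tX)}$ consists of isometries). Hence $\nabla_{\dot\gamma}\dot\gamma(0)=(\nabla_{X^*}X^*)(eK)$. For a Killing field $V$, the Killing equation gives $g(\nabla_V V,Z)=-g(\nabla_Z V,V)=-\tfrac12 Z\,g(V,V)$. So the geodesic condition at $eK$ is equivalent to $eK$ being a critical point of the function $f(p)=g(X^*,X^*)(p)$.

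I then evaluate $df_{eK}$ along the curves $s\mapsto\exp(sY)K$ with $Y\in\mathfrak m$; these curves realize every tangent vector at $eK$. By invariance of the metric,
\[
f(\exp(sY)K)=\big|\big(\mathrm{Ad}(\exp(-sY))X\big)_{\mathfrak m}\big|_g^2 ,
\]
because $d\phi_{a^{-1}}X^*(aK)=(\mathrm{Ad}(a^{-1})X)^*(eK)$, which is identified with $(\mathrm{Ad}(a^{-1})X)_{\mathfrak m}$. Differentiating at $s=0$ gives
\[
\frac{d}{ds}\Big|_{s=0}f(\exp(sY)K)=-2\langle [Y,X]_{\mathfrak m},X_{\mathfrak m}\rangle_g=2\langle [X,Y]_{\mathfrak m},X_{\mathfrak m}\rangle_g .
\]
Therefore $df_{eK}=0$ exactly when \eqref{KV:criterion} holds for all $Y\in\mathfrak m$, which proves the theorem.

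The main point requiring care is the identification $d\phi_{a^{-1}}(X^*(aK))=(\mathrm{Ad}(a^{-1})X)^*(eK)$. It follows from $a^{-1}\exp(tX)aK=\exp(t\,\mathrm{Ad}(a^{-1})X)K$. Combined with the fact that $Z^*(eK)$ corresponds to $Z_{\mathfrak m}$ (since $\mathfrak k$ acts trivially at $eK$), this is what converts $f$ into the explicit norm formula above. The sign convention $X^*$ versus the left action is harmless, because the final criterion is linear in $Y$ and $Y$ ranges over all of $\mathfrak m$.
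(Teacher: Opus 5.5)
Your proof is correct and complete. The paper itself gives no proof of this statement: it quotes it from Kowalski and Vanhecke and uses it as a black box, so your argument supplies a proof the paper only cites.

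Your route is the standard self-contained one, and each step checks out:
\begin{itemize}
\item Since $\gamma_X(\cdot+t)=\phi_{\exp(tX)}\circ\gamma_X$ and isometries preserve the Levi-Civita connection, the geodesic condition reduces to $t=0$.
\item Because $\gamma_X$ is an integral curve of the Killing field $X^*$, its acceleration at $eK$ is $(\nabla_{X^*}X^*)(eK)$. The Killing equation then gives $g(\nabla_{X^*}X^*,Z)=-\tfrac12\,Z\,g(X^*,X^*)$, so the acceleration vanishes at $eK$ exactly when $eK$ is a critical point of $g(X^*,X^*)$.
\item The identity $d\phi_{a^{-1}}X^*(aK)=(\mathrm{Ad}(a^{-1})X)^*(eK)$, together with $Z^*(eK)\leftrightarrow Z_{\mathfrak m}$, gives $g(X^*,X^*)(\exp(sY)K)=\lVert(\mathrm{Ad}(\exp(-sY))X)_{\mathfrak m}\rVert_g^2$. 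Its derivative at $s=0$ is $2\langle [X,Y]_{\mathfrak m},X_{\mathfrak m}\rangle_g$, and the sign is right.
\end{itemize}
This formula holds for any representative $a$ of a coset, since every $\phi_{a^{-1}}$ with $aK=p$ is an isometry sending $p$ to $eK$. So no well-definedness issue arises.

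Compared with citing the result, your argument shows that nothing beyond $G$-invariance of $g$ and $\mathrm{Ad}(K)$-invariance of $\mathfrak m$ is needed. In particular it does not use compactness of $G$, consistent with the generality in which the paper states the theorem. It also applies verbatim to $X$ with nonzero $\mathfrak k$-component, including the degenerate case $X\in\mathfrak k$. Finally, it gives a geometric reading of the criterion: $eK$ is a critical point of the squared length of the Killing field $X^*$. The citation, of course, buys brevity.
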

As an immediate consequence, $X\in\mathfrak{g}$ is an equigeodesic vector if and only if \eqref{KV:criterion} holds for every $G$-invariant metric $g$ and every $Y\in\mathfrak{m}.$\\

In the case where $G$ is compact, its Lie algebra admits an $\mathrm{Ad}(G)$-invariant inner product, say $(\cdot,\cdot).$ Fixing such an inner product and taking $\mathfrak{m}$ as the $(\cdot,\cdot)$-orthogonal complement of $\mathfrak{k}$ in $\mathfrak{g},$ we have, in particular, that $(\cdot,\cdot)$ defines an $\mathrm{Ad}(K)$-invariant inner product when restricted to $\mathfrak{m}.$ Therefore, for each $G$-invariant metric $g,$ there exists a unique linear map $\Lambda:\mathfrak{m}\to\mathfrak{m}$ such that 
\[
\langle X,Y\rangle_g=(\Lambda X,Y),\ \textnormal{for all}\ X,Y\in\mathfrak{m}.
\]
The map \(\Lambda\) is \((\cdot,\cdot)\)-symmetric, positive definite, and \(\operatorname{Ad}(K)\)-equivariant. Conversely, for every linear map \(\Lambda:\mathfrak m\to\mathfrak m\) with these properties, the formula above defines an \(\operatorname{Ad}(K)\)-invariant inner product on \(\mathfrak m\) and, therefore, a \(G\)-invariant metric $g$ on \(G/K\). In this situation, we refer to $\Lambda$ as the {\it metric operator} associated with $g.$\\

Under the assumption that $G$ is compact, the criterion of Theorem \ref{KV:theorem} can be refined in terms of the metric operator as follows.
\begin{proposition}[\hspace{-0.04em}{\cite[Proposition 3.5]{CGN}}]\label{CGN:theorem}
    If $G$ is compact and $g$ is a $G$-invariant metric on $G/K,$ then $X\in\mathfrak{g}$ is geodesic with respect to $g$ if and only if
    \begin{equation}\label{CGN:criterion}
        [X,\Lambda X_\mathfrak{m}]_\mathfrak{m}=0,
    \end{equation}
    where $\Lambda:\mathfrak{m}\to\mathfrak{m}$ is the metric operator associated with $g.$ As a consequence, $X\in\mathfrak{g}$ is an equigeodesic vector if and only if \eqref{CGN:criterion} holds for every metric operator $\Lambda.$
\end{proposition}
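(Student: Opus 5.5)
We derive Proposition~\ref{CGN:theorem} directly from Theorem~\ref{KV:criterion} by rewriting condition \eqref{KV:criterion} in terms of the metric operator $\Lambda$ and the $\operatorname{Ad}(G)$-invariant inner product $(\cdot,\cdot)$.

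Fix $X\in\mathfrak{g}$ and write $X=X_\mathfrak{k}+X_\mathfrak{m}$. For $Y\in\mathfrak{m}$, symmetry of $\Lambda$ and the fact that $\Lambda X_\mathfrak{m}\in\mathfrak{m}$ give
\[
\langle [X,Y]_\mathfrak{m},X_\mathfrak{m}\rangle_g=([X,Y]_\mathfrak{m},\Lambda X_\mathfrak{m})=([X,Y],\Lambda X_\mathfrak{m}).
\]
The second equality holds because $\mathfrak{m}$ is the $(\cdot,\cdot)$-orthogonal complement of $\mathfrak{k}$, so the $\mathfrak{k}$-component of $[X,Y]$ pairs to zero with $\Lambda X_\mathfrak{m}$.

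Next we use $\operatorname{ad}$-invariance of $(\cdot,\cdot)$, which follows from $\operatorname{Ad}(G)$-invariance by differentiation: $([X,Y],Z)=-(Y,[X,Z])$. This yields
\[
\langle [X,Y]_\mathfrak{m},X_\mathfrak{m}\rangle_g=-(Y,[X,\Lambda X_\mathfrak{m}])=-(Y,[X,\Lambda X_\mathfrak{m}]_\mathfrak{m}),
\]
where we again used $Y\in\mathfrak{m}$ and $\mathfrak{m}\perp\mathfrak{k}$.

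Therefore \eqref{KV:criterion} holds for all $Y\in\mathfrak{m}$ if and only if $[X,\Lambda X_\mathfrak{m}]_\mathfrak{m}$ is $(\cdot,\cdot)$-orthogonal to all of $\mathfrak{m}$. Since $(\cdot,\cdot)$ restricted to $\mathfrak{m}$ is nondegenerate (it is an inner product), this is equivalent to $[X,\Lambda X_\mathfrak{m}]_\mathfrak{m}=0$. By Theorem~\ref{KV:theorem}, this proves the first assertion.

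For the equigeodesic statement, recall that $G$-invariant metrics correspond bijectively to metric operators $\Lambda$, namely the $(\cdot,\cdot)$-symmetric, positive definite, $\operatorname{Ad}(K)$-equivariant maps. Hence $X$ is geodesic for every $G$-invariant metric exactly when \eqref{CGN:criterion} holds for every such $\Lambda$.

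The only delicate point is bookkeeping with the projections: each step discards a $\mathfrak{k}$-component, and each such step relies on one argument of the inner product lying in $\mathfrak{m}$.
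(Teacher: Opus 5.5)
Your proof is correct. You rewrite the Kowalski--Vanhecke condition of Theorem~\ref{KV:theorem} using $\langle\cdot,\cdot\rangle_g=(\Lambda\cdot,\cdot)$, the orthogonality $\mathfrak{k}\perp\mathfrak{m}$, and the $\operatorname{ad}$-invariance of $(\cdot,\cdot)$, which is the standard argument. The paper does not reprove this statement but quotes it from Cohen--Grama--Negreiros, where it is obtained by essentially the same computation.
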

\section{Homogeneous Riemannian submersions}
Let $G$ be a Lie group and let $K,H$ be closed subgroups of $G$ such that $K\subseteq H$ and the homogeneous spaces $G/K$ and $G/H$ are reductive. Then $H/K$ is also reductive. We consider the natural projection
\begin{eqnarray*}
\pi\colon & G/K & \longrightarrow  G/H\\
           & aK & \longmapsto  aH.
\end{eqnarray*}
Equivalently, $G/K$ is the fiber bundle associated with the principal $H$-bundle $G\to G/H,$ with fiber $H/K.$ Let $\mathfrak{g}$ be the Lie algebra of $G$ and let $\mathfrak{k}\subseteq\mathfrak{h}\subseteq\mathfrak{g}$ be the Lie subalgebras associated with $K$ and $H,$ respectively. We choose an $\mathrm{Ad}(H)$-invariant complement $\mathfrak{n}$ of $\mathfrak{h}$ in $\mathfrak{g}$ and an $\mathrm{Ad}(K)$-invariant complement $\mathfrak{p}$ of $\mathfrak{k}$ in $\mathfrak{h}.$ Then 
\[
\mathfrak{g}=\mathfrak{h}\oplus\mathfrak{n},\ \mathfrak{h}=\mathfrak{k}\oplus\mathfrak{p}
\]
and $\mathfrak{m}:=\mathfrak{p}\oplus\mathfrak{n}$ is an $\mathrm{Ad}(K)$-invariant complement of $\mathfrak{k}$ in $\mathfrak{g}.$ For $X\in\mathfrak{m},$ we write
\[
X=X_\mathfrak{p}+X_\mathfrak{n},\ X_\mathfrak{p}\in\mathfrak{p},\ X_\mathfrak{n}\in\mathfrak{n}.
\]
We refer to $X_\mathfrak{p}$ and $X_\mathfrak{n}$ as the {\it fiber component} and the {\it base component} of $X,$ respectively. Under the identifications $T_{eK}(G/K)\simeq \mathfrak{p}\oplus\mathfrak{n}$ and $T_{eH}(G/H)\simeq \mathfrak{n},$ we have
\[
(d\pi)_{eK}:\mathfrak{p}\oplus\mathfrak{n}\to \mathfrak{n},\ 
(d\pi)_{eK}(X)=X_{\mathfrak{n}}.
\] 
Therefore, $\ker(d\pi)_{eK}=\mathfrak{p}.$ In particular, the fiber component of each $X\in\mathfrak{m}$ is precisely its vertical component. The following theorem characterizes, for a fixed $G$-invariant metric on $G/H,$ the $G$-invariant metrics on $G/K$ for which $\pi$ is a Riemannian submersion.
\begin{theorem}\label{theorem:characterization:Riemannian:submersions}
Let $g_b$ be a $G$-invariant metric on $G/H$ and write $\langle\cdot,\cdot\rangle_b:=\langle\cdot,\cdot\rangle_{g_b}.$ The following hold:
\begin{itemize}
\item[$a)$] If $g$ is a $G$-invariant metric on $G/K$ such that the projection
\[
\pi:(G/K,g)\longrightarrow (G/H,g_b)
\]
is a Riemannian submersion, then there exist an $\operatorname{Ad}(K)$-invariant inner product $\langle\cdot,\cdot\rangle_f$ on $\mathfrak{p}$ and a $K$-equivariant linear map $L\in \mathrm{Hom}_K(\mathfrak{n},\mathfrak{p})$ such that, for all $X,Y\in\mathfrak{p}\oplus\mathfrak{n},$
\begin{equation}\label{general:invariant:product}
\langle X,Y\rangle_g
=
\langle X_\mathfrak{p}-LX_\mathfrak{n},Y_\mathfrak{p}-LY_\mathfrak{n}\rangle_f
+
\langle X_\mathfrak{n},Y_\mathfrak{n}\rangle_b.
\end{equation}

\item[$b)$] Conversely, for any $\operatorname{Ad}(K)$-invariant inner product $\langle\cdot,\cdot\rangle_f$ on $\mathfrak{p}$ and any $K$-equivariant linear map $L\in \mathrm{Hom}_K(\mathfrak{n},\mathfrak{p}),$ formula \eqref{general:invariant:product} defines an $\operatorname{Ad}(K)$-invariant inner product on $\mathfrak{p}\oplus\mathfrak{n}$ such that, for the corresponding $G$-invariant metric $g,$ the projection
\[
\pi:(G/K,g)\longrightarrow (G/H,g_b)
\]
is a Riemannian submersion.

\end{itemize}
\end{theorem}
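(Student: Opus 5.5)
Everything reduces to linear algebra at the identity coset. By $G$-invariance of both $g$ and $g_b$ and the $G$-equivariance of $\pi$ (since $\pi\circ\phi_a=\phi_a\circ\pi$), the map $\pi$ is a Riemannian submersion exactly when $(d\pi)_{eK}$ restricted to the $g$-orthogonal complement of $\ker(d\pi)_{eK}=\mathfrak{p}$ is an isometry onto $(\mathfrak{n},\langle\cdot,\cdot\rangle_b)$. Indeed, at $aK$ we have $d\pi_{aK}\circ d(\phi_a)_{eK}=d(\phi_a)_{eH}\circ d\pi_{eK}$, and the maps $\phi_a$ are isometries of both spaces. So I will work only with inner products on $\mathfrak{m}=\mathfrak{p}\oplus\mathfrak{n}$ and the projection $X\mapsto X_\mathfrak{n}$.

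For part $a)$, let $\mathcal H:=\mathfrak{p}^{\perp_g}\subseteq\mathfrak{m}$ denote the horizontal space. Since $\langle\cdot,\cdot\rangle_g$ and $\mathfrak{p}$ are $\operatorname{Ad}(K)$-invariant, so is $\mathcal H$. Because $\mathcal H\cap\mathfrak{p}=0$ and $\dim\mathcal H=\dim\mathfrak{n}$, the space $\mathcal H$ is the graph of a unique linear map $\mathfrak{n}\to\mathfrak{p}$. I write this map as $-L$, so that $\mathcal H=\{Z+(-LZ)\ :\ Z\in\mathfrak{n}\}$; the sign is chosen to match \eqref{general:invariant:product}. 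Uniqueness of the graph together with $\operatorname{Ad}(K)$-invariance of $\mathcal H$, $\mathfrak{p}$ and $\mathfrak{n}$ gives $L\in\operatorname{Hom}_K(\mathfrak{n},\mathfrak{p})$. I then set $\langle\cdot,\cdot\rangle_f:=\langle\cdot,\cdot\rangle_g|_{\mathfrak{p}\times\mathfrak{p}}$, which is $\operatorname{Ad}(K)$-invariant. Now decompose $X=(X_\mathfrak{p}+LX_\mathfrak{n})+(X_\mathfrak{n}-LX_\mathfrak{n})$, with the first summand in $\mathfrak{p}$ and the second in $\mathcal H$. These summands are $g$-orthogonal, and the submersion condition says the horizontal part has $g$-norm equal to the $b$-norm of $X_\mathfrak{n}$. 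Polarizing then yields \eqref{general:invariant:product}, with $X_\mathfrak{p}+LX_\mathfrak{n}$ in place of $X_\mathfrak{p}-LX_\mathfrak{n}$; this is just the sign convention, and replacing $L$ by $-L$ (which is still $K$-equivariant) gives the formula exactly as stated.

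For part $b)$, the right-hand side of \eqref{general:invariant:product} is visibly symmetric and bilinear. It is positive definite: if it vanishes at $(X,X)$ then $X_\mathfrak{n}=0$, hence $X_\mathfrak{p}=0$. It is $\operatorname{Ad}(K)$-invariant because $\operatorname{Ad}(k)$ preserves $\mathfrak{p}$ and $\mathfrak{n}$, commutes with $L$, and preserves both $\langle\cdot,\cdot\rangle_f$ and $\langle\cdot,\cdot\rangle_b$; the latter is even $\operatorname{Ad}(H)$-invariant. So it defines a $G$-invariant metric $g$. Its restriction to $\mathfrak{p}$ is $\langle\cdot,\cdot\rangle_f$. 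The vectors $Z+LZ$ with $Z\in\mathfrak{n}$ are $g$-orthogonal to $\mathfrak{p}$, since their $\mathfrak{p}-L\mathfrak{n}$ component vanishes. So these vectors form the horizontal space, and on them $\|Z+LZ\|_g^2=\langle Z,Z\rangle_b$, while $d\pi(Z+LZ)=Z$. By the reduction in the first paragraph, $\pi$ is a Riemannian submersion.

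The main point requiring care is the reduction step: one must check that $G$-equivariance transports the submersion property from $eK$ to every point, using the identities $d\pi_{aK}\circ d(\phi_a)_{eK}=d(\phi_a)_{eH}\circ d\pi_{eK}$. The rest is routine bookkeeping of the sign of $L$.
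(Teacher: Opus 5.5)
Your proof is correct and follows essentially the same route as the paper. In part $a)$ you realize the $g$-orthogonal complement of $\mathfrak{p}$ as the graph of a $K$-equivariant map $\mathfrak{n}\to\mathfrak{p}$, set $\langle\cdot,\cdot\rangle_f=\langle\cdot,\cdot\rangle_g|_{\mathfrak{p}}$, and split $X$ into vertical and horizontal parts; in part $b)$ you check that $\{Y+LY : Y\in\mathfrak{n}\}$ is the horizontal space, on which $(d\pi)_{eK}$ is an isometry, and transport this by $G$-equivariance.

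The only wrinkle is the sign you create yourself in part $a)$. If you write the horizontal space as $\{Y+LY : Y\in\mathfrak{n}\}$ from the start, as you do in part $b)$ and as the paper does, the decomposition $X=(X_\mathfrak{p}-LX_\mathfrak{n})+(X_\mathfrak{n}+LX_\mathfrak{n})$ gives the stated formula directly. There is then no need to replace $L$ by $-L$.
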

\begin{proof} Let us prove $a).$ Let $\mathfrak{p}^\perp\subseteq\mathfrak{p}\oplus\mathfrak{n}$ be the $\langle\cdot,\cdot\rangle_g$-orthogonal complement of $\mathfrak{p}$ in $\mathfrak{p}\oplus\mathfrak{n}.$ If $\pi$ is a Riemannian submersion, then the restriction
\[
(d\pi)_{eK}\big{|}_{\mathfrak{p}^\perp}:(\mathfrak{p}^\perp,\langle\cdot,\cdot\rangle_g)\to(\mathfrak{n},\langle\cdot,\cdot\rangle_b)
\]
is a linear isometry. Define $L:\mathfrak{n}\to\mathfrak{p}$ by 
\begin{equation*}
    LY:=\left[(d\pi)_{eK}\big{|}_{\mathfrak{p}^\perp}\right]^{-1}(Y)-Y.
\end{equation*}
For a given $Y\in \mathfrak{n},$ since $(d\pi)_{eK}\left(\left[(d\pi)_{eK}\big{|}_{\mathfrak{p}^\perp}\right]^{-1}(Y)\right)=Y$ and $(d\pi)_{eK}(Y)=Y,$ we have $(d\pi)_{eK}(LY)=0,$ so $LY\in\mathrm{ker}(d\pi)_{eK}=\mathfrak{p}.$ This proves that $L$ is well-defined. In addition, it is clear that $L$ is linear. To see that $L$ is $K$-equivariant, observe that 
\begin{align*}
    \mathfrak{p}^{\perp}=\left\{\left[(d\pi)_{eK}\big{|}_{\mathfrak{p}^\perp}\right]^{-1}(Y):Y\in\mathfrak{n}\right\}=\left\{LY+Y:Y\in\mathfrak{n}\right\}.
\end{align*}
Since $\langle\cdot,\cdot\rangle_g$ is $\mathrm{Ad}(K)$-invariant and $\mathfrak{p}$ is $\mathrm{Ad}(K)$-invariant, $\mathfrak{p}^{\perp}$ is $\mathrm{Ad}(K)$-invariant. Thus, given $k\in K$ and $Y\in\mathfrak{n},$ it follows that $\mathrm{Ad}(k)\left(LY+Y\right)=\mathrm{Ad}(k)LY+\mathrm{Ad}(k)Y\in\mathfrak{p}^\perp.$ Hence, $\mathrm{Ad}(k)LY+\mathrm{Ad}(k)Y=LY_1+Y_1,$ for some $Y_1\in\mathfrak{n}.$ Since \(\mathfrak p\) and \(\mathfrak n\) are \(\mathrm{Ad}(K)\)-invariant, we have \(\mathrm{Ad}(k)LY,LY_1\in\mathfrak p\) and \(\mathrm{Ad}(k)Y,Y_1\in\mathfrak n\). By the uniqueness of the decomposition in \(\mathfrak p\oplus\mathfrak n\), we get $\mathrm{Ad}(k)LY=LY_1$ and $\mathrm{Ad}(k)Y=Y_1.$ Hence, $\mathrm{Ad}(k)LY=L\mathrm{Ad}(k)Y,$ that is, $L$ is $K$-equivariant. Now, for $X_1,X_2\in\mathfrak{p},$ define $\langle X_1,X_2\rangle_f:=\langle X_1,X_2\rangle_g.$ Since $\langle\cdot,\cdot\rangle_g$ is $\mathrm{Ad}(K)$-invariant and $\mathfrak{p}$ is $\mathrm{Ad}(K)$-invariant, $\langle\cdot,\cdot\rangle_f$ is $\mathrm{Ad}(K)$-invariant. Since $(d\pi)_{eK}\big{|}_{\mathfrak{p}^\perp}$ is a linear isometry, we have
 \begin{align*}
     \langle LY_1+Y_1,LY_2+Y_2\rangle_g&=\langle(d\pi)_{eK}(LY_1+Y_1),(d\pi)_{eK}(LY_2+Y_2)\rangle_{b}\\
     &=\langle Y_1,Y_2\rangle_b,
 \end{align*}
 for all $Y_1,Y_2\in\mathfrak{n}.$ For $X,Y\in\mathfrak{p}\oplus\mathfrak{n},$ we have 
 \begin{align*}
     &X=X_\mathfrak{p}+X_\mathfrak{n}=(X_\mathfrak{p}-LX_{\mathfrak{n}})+(LX_\mathfrak{n}+X_\mathfrak{n})\\
     &Y=Y_\mathfrak{p}+Y_\mathfrak{n}=(Y_\mathfrak{p}-LY_{\mathfrak{n}})+(LY_\mathfrak{n}+Y_\mathfrak{n}),
 \end{align*}
 with $X_\mathfrak{p}-LX_{\mathfrak{n}},Y_\mathfrak{p}-LY_{\mathfrak{n}}\in\mathfrak{p}$ and $LX_\mathfrak{n}+X_\mathfrak{n},LY_\mathfrak{n}+Y_\mathfrak{n}\in\mathfrak{p}^\perp.$ Therefore
 \begin{align*}
     \langle X,Y\rangle_g&=\langle X_\mathfrak{p}-LX_\mathfrak{n},Y_\mathfrak{p}-LY_\mathfrak{n}\rangle_g+\langle LX_\mathfrak{n}+X_\mathfrak{n},LY_\mathfrak{n}+Y_\mathfrak{n}\rangle_g\\
     &=\langle X_\mathfrak{p}-LX_\mathfrak{n},Y_\mathfrak{p}-LY_\mathfrak{n}\rangle_f+\langle X_\mathfrak{n},Y_\mathfrak{n}\rangle_b.
 \end{align*}
 This proves $a)$.\\

\noindent Conversely, let \(\langle\cdot,\cdot\rangle_f\) be an \(\operatorname{Ad}(K)\)-invariant inner product on \(\mathfrak p\) and let \(L\in\mathrm{Hom}_K(\mathfrak n,\mathfrak p)\). Define \(\langle\cdot,\cdot\rangle_g\) on \(\mathfrak p\oplus\mathfrak n\) by \eqref{general:invariant:product}. This is clearly bilinear and symmetric. Moreover, if \(X=X_\mathfrak p+X_\mathfrak n\), then
\[
\langle X,X\rangle_g
=
\langle X_\mathfrak p-LX_\mathfrak n,X_\mathfrak p-LX_\mathfrak n\rangle_f
+
\langle X_\mathfrak n,X_\mathfrak n\rangle_b.
\]
Thus \(\langle X,X\rangle_g\geq 0\). If \(\langle X,X\rangle_g=0\), then \(X_\mathfrak n=0\) and \(X_\mathfrak p-LX_\mathfrak n=0\). Hence \(X_\mathfrak n=0\) and \(X_\mathfrak p=0\). Therefore \(\langle\cdot,\cdot\rangle_g\) is an inner product. We now prove that it is \(\operatorname{Ad}(K)\)-invariant. Let \(k\in K\) and \(X,Y\in\mathfrak p\oplus\mathfrak n\). Since \(\mathfrak p\) and \(\mathfrak n\) are \(\operatorname{Ad}(K)\)-invariant, we have
\[
(\operatorname{Ad}(k)X)_\mathfrak p=\operatorname{Ad}(k)X_\mathfrak p,\ (\operatorname{Ad}(k)X)_\mathfrak n=\operatorname{Ad}(k)X_\mathfrak n.
\]
Since \(L\) is \(K\)-equivariant, $L(\operatorname{Ad}(k)X_\mathfrak n)=\operatorname{Ad}(k)LX_\mathfrak n.$ Therefore
\[
(\operatorname{Ad}(k)X)_\mathfrak p-L(\operatorname{Ad}(k)X)_\mathfrak n
=
\operatorname{Ad}(k)(X_\mathfrak p-LX_\mathfrak n).
\]
Using the \(\operatorname{Ad}(K)\)-invariance of \(\langle\cdot,\cdot\rangle_f\) and the \(\operatorname{Ad}(K)\)-invariance of \(\langle\cdot,\cdot\rangle_b\), we get
\[
\begin{aligned}
\langle \operatorname{Ad}(k)X,\operatorname{Ad}(k)Y\rangle_g
=&\,\langle \operatorname{Ad}(k)(X_\mathfrak p-LX_\mathfrak n),
\operatorname{Ad}(k)(Y_\mathfrak p-LY_\mathfrak n)\rangle_f\\
&+
\langle \operatorname{Ad}(k)X_\mathfrak n,\operatorname{Ad}(k)Y_\mathfrak n\rangle_b\\
=&\,\langle X_\mathfrak p-LX_\mathfrak n,Y_\mathfrak p-LY_\mathfrak n\rangle_f
+
\langle X_\mathfrak n,Y_\mathfrak n\rangle_b\\
=&\,\langle X,Y\rangle_g.
\end{aligned}
\]
Hence \(\langle\cdot,\cdot\rangle_g\) is \(\operatorname{Ad}(K)\)-invariant, and therefore it defines a \(G\)-invariant metric \(g\) on \(G/K\). It remains to prove that \(\pi:(G/K,g)\to(G/H,g_b)\) is a Riemannian submersion. Observe that, for every \(Y\in\mathfrak n\) and every \(Z\in\mathfrak p\),
\[
\langle Z,LY+Y\rangle_g=\langle Z,LY-LY\rangle_f
+
\langle 0,Y\rangle_b
=0.
\]
Hence $\{LY+Y:Y\in\mathfrak{n}\}
\subseteq \mathfrak{p}^\perp.$
Since both spaces have dimension \(\dim\mathfrak n\), we get $\mathfrak{p}^\perp=\{LY+Y:Y\in\mathfrak n\}.$
Additionally, for \(Y_1,Y_2\in\mathfrak n\),
\[
\begin{aligned}
\langle LY_1+Y_1,LY_2+Y_2\rangle_g
&=\langle LY_1-LY_1,LY_2-LY_2\rangle_f+\langle Y_1,Y_2\rangle_b\\
&=
\langle Y_1,Y_2\rangle_b.
\end{aligned}
\]
Since $(d\pi)_{eK}(LY+Y)=Y,$
the restriction
\[
(d\pi)_{eK}\big{|}_{\mathfrak p^\perp}:(\mathfrak p^\perp,\langle\cdot,\cdot\rangle_g)\to(\mathfrak n,\langle\cdot,\cdot\rangle_b)
\]
is a linear isometry. Therefore, \(\pi\) is a Riemannian submersion at \(eK\). Since \(g\) and \(g_b\) are \(G\)-invariant and \(\pi\) is \(G\)-equivariant, the same holds at every point of \(G/K\). This proves \(b)\).
\end{proof}
We now give an algebraic criterion for the fibers of $\pi$ to be totally geodesic.
\begin{theorem}\label{theorem:characterization:Riemannian:submersions:totally:geodesic}
Let $g_b$ be a $G$-invariant metric on $G/H$ and write $\langle\cdot,\cdot\rangle_b:=\langle\cdot,\cdot\rangle_{g_b}.$ The following hold:

\begin{itemize}
    \item[$a)$] For $g$ as in \eqref{general:invariant:product}, the fibers of $\pi:(G/K,g)\longrightarrow (G/H,g_b)$ are totally geodesic if and only if
\begin{equation}\label{condition:totally:geodesic}
\langle [LY,U]_{\mathfrak{p}}-L[Y,U],V\rangle_f
+
\langle U,[LY,V]_{\mathfrak{p}}-L[Y,V]\rangle_f
=0,
\end{equation}
for all $U,V\in\mathfrak{p}$ and $Y\in\mathfrak{n}.$ 

\item[$b)$]  For each \(U\in \mathfrak{p}\), consider the following diagram
\[
\begin{tikzcd}
\mathfrak{n} \arrow[r,"\ad(U)"] \arrow[d, "\ad(U)\circ L"']
& \mathfrak{n} \arrow[d, "L"] \\
\mathfrak{h} \arrow[r,"\Pi_{\mathfrak{p}}"]
& \mathfrak{p}
\end{tikzcd}
\]
where \(\Pi_{\mathfrak{p}}:\mathfrak{h}\to \mathfrak{p}\), 
\(\Pi_{\mathfrak{p}}(X)=X_{\mathfrak{p}}\), denotes the projection onto 
\(\mathfrak{p}\). If this diagram commutes for every \(U\in\mathfrak{p}\), i.e., $\Pi_{\mathfrak{p}}\circ \ad(U)\circ L=L\circ \ad(U),$ then the fibers of the Riemannian submersion \(\pi\) are totally geodesic.

\item[$c)$] If $L\colon \mathfrak{n}\to \mathfrak{p}$ satisfies 
\begin{equation}\label{H-equivariance}
    \operatorname{Ad}(h)L=L\operatorname{Ad}(h)\big{|}_{\mathfrak{n}},\ \textnormal{for all}\ h\in H,
\end{equation}
then, for $g$ as in \eqref{general:invariant:product}, the fibers of $\pi:(G/K,g)\longrightarrow (G/H,g_b)$ are totally geodesic.

\item[$d)$] If $H$ is connected and $L\colon \mathfrak{n}\to \mathfrak{p}$ satisfies 
\begin{equation}\label{h-equivariance}
    \operatorname{ad}(U)L=L\operatorname{ad}(U)\big{|}_{\mathfrak{n}},\ \textnormal{for all}\ U\in\mathfrak{h},
\end{equation} 
then, for $g$ as in \eqref{general:invariant:product}, the fibers of $\pi:(G/K,g)\longrightarrow (G/H,g_b)$ are totally geodesic.
 
\end{itemize}
\begin{proof}
$a)$ Since $\pi$ is $G$-equivariant and $g$ is
$G$-invariant, it is enough to consider the fiber through the origin,
$\pi^{-1}(eH)=H/K$. At $eK$, this fiber has tangent space
$\mathfrak{p}$. Moreover, we already showed that $\mathfrak{p}^{\perp}=\{LY+Y:Y\in\mathfrak{n}\}.$ Therefore, $H/K$ is totally geodesic if and only if
\[
\langle \nabla_UV,LY+Y\rangle_g=0
\]
for all $U,V\in\mathfrak{p}$ and $Y\in\mathfrak{n}$, where $\nabla$
denotes the Levi-Civita connection of $(G/K,g)$. By \cite[Lemma 7.27]{B}, for $U,V\in\mathfrak{p}$ and $Y\in\mathfrak{n}$ we have
\[
2\langle \nabla_UV,LY+Y\rangle_g=\langle [V,U]_{\mathfrak{m}},LY+Y\rangle_g+\langle [LY+Y,U]_{\mathfrak{m}},V\rangle_g+\langle [LY+Y,V]_{\mathfrak{m}},U\rangle_g.
\]
Since $U,V\in\mathfrak{p}\subseteq\mathfrak{h}$, we have $[U,V]\in\mathfrak{h}=\mathfrak{k}\oplus\mathfrak{p}.$ Hence $[U,V]_{\mathfrak{m}}\in\mathfrak{p}$. Since $LY+Y\in\mathfrak{p}^{\perp}$, it follows that 
\[
\langle [U,V]_{\mathfrak{m}},LY+Y\rangle_g=0.
\]
On the other hand, since $U,LY\in\mathfrak{p}$ and $Y\in\mathfrak{n}$, then $[Y,U]\in\mathfrak{n}$ and $[LY,U]\in\mathfrak{h}=\mathfrak{k}\oplus\mathfrak{p}.$ Therefore
\[
[LY+Y,U]_{\mathfrak{m}}
=
[LY,U]_{\mathfrak{p}}+[Y,U].
\]
Using \eqref{general:invariant:product}, we get
\[
\langle [LY+Y,U]_{\mathfrak{m}},V\rangle_g=\langle [LY,U]_{\mathfrak{p}}+[Y,U],V\rangle_g=\langle [LY,U]_{\mathfrak{p}}-L[Y,U],V\rangle_f.
\]
Analogously,
\[
\langle [LY+Y,V]_{\mathfrak{m}},U\rangle_g
=
\langle [LY,V]_{\mathfrak{p}}-L[Y,V],U\rangle_f.
\]
Consequently,
\[
2\langle \nabla_UV,LY+Y\rangle_g=\langle [LY,U]_{\mathfrak{p}}-L[Y,U],V\rangle_f+
\langle [LY,V]_{\mathfrak{p}}-L[Y,V],U\rangle_f.
\]
Thus, $H/K$ is totally geodesic if and only if
\[
\langle [LY,U]_{\mathfrak{p}}-L[Y,U],V\rangle_f
+
\langle U,[LY,V]_{\mathfrak{p}}-L[Y,V]\rangle_f
=0
\]
for all $U,V\in\mathfrak{p}$ and $Y\in\mathfrak{n}$. This proves $a)$.\\

$b)$ It follows immediately from equation (\ref{condition:totally:geodesic}).\\

$c)$ If $L$ satisfies \eqref{H-equivariance}, then, for any $h\in H$ and $Y\in \mathfrak{n}$, $\operatorname{Ad}(h)LY=L(\operatorname{Ad}(h)Y)\in \mathfrak{p}.$ Let $U\in \mathfrak{p}.$ Since $U\in \mathfrak{h}$, we have $\exp(tU)\in H$. Thus
\[
\operatorname{Ad}(\exp(tU))LY=L(\operatorname{Ad}(\exp(tU))Y).
\]
By \cite[p.~123]{book SM}, $d(\operatorname{Ad})_e(U)=\operatorname{ad}(U)$. Therefore, differentiating the previous identity at $t=0$, we obtain $[U,LY]=L[U,Y].$ Equivalently, $[LY,U]=L[Y,U]$. Since $L[Y,U]\in \mathfrak{p}$, it follows that $[LY,U]_{\mathfrak{p}}=L[Y,U].$ Thus $[LY,U]_{\mathfrak{p}}-L[Y,U]=0$. Analogously, for every $V\in \mathfrak{p}$, $[LY,V]_{\mathfrak{p}}-L[Y,V]=0.$ Therefore the condition in item $a)$ is satisfied, and the fibers of $\pi$ are totally geodesic.\\


$d)$ By item $c)$, it is enough to prove that  \eqref{h-equivariance} implies \eqref{H-equivariance}. Let $U\in \mathfrak{h}$ and 
$Y\in \mathfrak{n}$ be any vectors. Then
\begin{eqnarray*}
    \Ad(\exp U)LY &=& e^{\ad(U)}LY\\
    &=& Le^{\ad(U)}Y\\
    &=& L\Ad(\exp U)Y
\end{eqnarray*}
where in the second equality we used the hypothesis $\ad(U)L=L\ad(U)$. Now, since $H$ is connected, for $h\in H$ there exist $U_1, \dots, U_s \in \mathfrak{h}$ such that $h=\exp(U_1)\cdots\exp(U_s),$ then $\Ad(h)LY=L\Ad(h)Y$. This proves $d)$.
\end{proof}
\end{theorem}

\begin{remark} If $\operatorname{Hom}_K(\mathfrak{n},\mathfrak{p})=0$, then necessarily $L=0$. Hence, for a fixed $G$-invariant metric $g_b$ on $G/H$, the $G$-invariant metrics $g$ on $G/K$ for which
\[
\pi:(G/K,g)\longrightarrow (G/H,g_b)
\]
is a Riemannian submersion are in one-to-one correspondence with the $H$-invariant metrics $g_f$ on the fiber $H/K$. In this case,
\[
\langle X,Y\rangle_g
=
\langle X_\mathfrak{p},Y_\mathfrak{p}\rangle_f
+
\langle X_\mathfrak{n},Y_\mathfrak{n}\rangle_b,
\]
for all $X,Y\in\mathfrak{m}$. For metrics of this form, the condition \eqref{condition:totally:geodesic} is automatically satisfied. Hence, the fibers of $\pi$ are totally geodesic in $G/K$, as also follows from a result of Bérard-Bergery \cite[p.~60]{BB}. \qed

\end{remark}

\noindent Assume now that \(G\) is compact and fix an \(\operatorname{Ad}(G)\)-invariant inner product \((\cdot,\cdot)\) on \(\mathfrak g\). We take the decompositions
\[
\mathfrak g=\mathfrak h\oplus\mathfrak n,\ \mathfrak h=\mathfrak k\oplus\mathfrak p
\]
to be orthogonal with respect to \((\cdot,\cdot)\). Let \(g\), \(g_b\), and \(g_f\) be invariant metrics on \(G/K\), \(G/H\), and \(H/K\), respectively, where \(g\) and \(g_b\) are \(G\)-invariant and \(g_f\) is \(H\)-invariant. Let \(\Lambda:\mathfrak p\oplus\mathfrak n\to\mathfrak p\oplus\mathfrak n\), \(\Lambda_b:\mathfrak n\to\mathfrak n\), and \(\Lambda_f:\mathfrak p\to\mathfrak p\) be the corresponding metric operators with respect to \((\cdot,\cdot)\). In this setting, the previous theorems can be reformulated in terms of metric operators as follows.

\begin{corollary}\label{corollary:compact:general:metric:operator}
Fix the metric operator \(\Lambda_b:\mathfrak n\to\mathfrak n\) associated with the \(G\)-invariant metric \(g_b\) on \(G/H\). Then the \(G\)-invariant metrics \(g\) on \(G/K\) for which
\[
\pi:(G/K,g)\longrightarrow (G/H,g_b)
\]
is a Riemannian submersion are in one-to-one correspondence with the pairs \((\Lambda_f,L)\), where \(\Lambda_f:\mathfrak p\to\mathfrak p\) is the metric operator of some \(H\)-invariant metric \(g_f\) on \(H/K\) and \(L\in\operatorname{Hom}_K(\mathfrak n,\mathfrak p)\). For such a pair $(\Lambda_f,L),$ the corresponding $G$-invariant metric $g$ on $G/K$ has metric operator $\Lambda:\mathfrak{p}\oplus\mathfrak{n}\to\mathfrak{p}\oplus\mathfrak{n}$ given by
\begin{equation}\label{general:metric:operator}
\Lambda X
=
\Lambda_f(X_\mathfrak{p}-LX_\mathfrak{n})
+
\Lambda_bX_\mathfrak{n}
-
L^*\Lambda_f(X_\mathfrak{p}-LX_\mathfrak{n}),
\end{equation}
where \(L^*:\mathfrak p\to\mathfrak n\) denotes the adjoint of \(L\) with respect to \((\cdot,\cdot)\). Moreover, the fibers of $\pi$ are totally geodesic if and only if
\begin{equation}\label{condition:totally:geodesic:operators}
\Lambda_f A_Y^L+\left(A_Y^L\right)^*\Lambda_f=0
\end{equation}
for every $Y\in\mathfrak{n}$, where $A_Y^L:\mathfrak{p}\longrightarrow\mathfrak{p},\ A_Y^L(U)=[LY,U]_{\mathfrak{p}}-L[Y,U],$
and $\left(A_Y^L\right)^*$ denotes the adjoint of $A_Y^L$ with respect to
$(\cdot,\cdot)$ restricted to $\mathfrak{p}$.
\end{corollary}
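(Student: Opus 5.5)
The plan is to translate Theorem~\ref{theorem:characterization:Riemannian:submersions} and Theorem~\ref{theorem:characterization:Riemannian:submersions:totally:geodesic}~$a)$ into operator language using the fixed inner product $(\cdot,\cdot)$. Since $\mathfrak p$ and $\mathfrak n$ are $(\cdot,\cdot)$-orthogonal, we have $\langle U,V\rangle_f=(\Lambda_fU,V)$ on $\mathfrak p$ and $\langle Y,Z\rangle_b=(\Lambda_bY,Z)$ on $\mathfrak n$.

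\textbf{Step 1: the bijection.} $\operatorname{Ad}(K)$-invariant inner products on $\mathfrak p$ correspond exactly to $H$-invariant metrics on $H/K$, hence to metric operators $\Lambda_f$. By Theorem~\ref{theorem:characterization:Riemannian:submersions}, every $g$ making $\pi$ a Riemannian submersion comes from some pair $(\langle\cdot,\cdot\rangle_f,L)$, and every such pair produces one. For injectivity, the proof of part $a)$ recovers the data from $g$: $\langle\cdot,\cdot\rangle_f$ is the restriction of $\langle\cdot,\cdot\rangle_g$ to $\mathfrak p$. Also $L$ is determined by $\mathfrak p^\perp=\{LY+Y:Y\in\mathfrak n\}$, which is the $g$-orthogonal complement of $\mathfrak p$. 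Hence distinct pairs give distinct metrics, and the correspondence is one-to-one.

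\textbf{Step 2: the operator formula.} Write $W:=X_\mathfrak p-LX_\mathfrak n$ and expand \eqref{general:invariant:product}:
\[
\langle X,Y\rangle_g=(\Lambda_f W,Y_\mathfrak p)-(\Lambda_f W,LY_\mathfrak n)+(\Lambda_bX_\mathfrak n,Y_\mathfrak n).
\]
The middle term equals $(L^*\Lambda_fW,Y_\mathfrak n)$, where $L^*:\mathfrak p\to\mathfrak n$. Using orthogonality of $\mathfrak p$ and $\mathfrak n$, the whole expression equals $(\Lambda_fW+\Lambda_bX_\mathfrak n-L^*\Lambda_fW,\,Y)$ for all $Y$. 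Uniqueness of the metric operator then gives \eqref{general:metric:operator}. Symmetry, positivity and equivariance of this $\Lambda$ are automatic, because it represents an inner product already known to be $\operatorname{Ad}(K)$-invariant.

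\textbf{Step 3: the totally geodesic condition.} Rewrite \eqref{condition:totally:geodesic} as
\[
(\Lambda_fA_Y^LU,V)+(\Lambda_fU,A_Y^LV)=0 \quad\text{for all } U,V\in\mathfrak p.
\]
Since $\Lambda_f$ is symmetric, the second term is $((A_Y^L)^*\Lambda_fU,V)$. So the condition reads $\big((\Lambda_fA_Y^L+(A_Y^L)^*\Lambda_f)U,V\big)=0$ for all $U,V$. By nondegeneracy of $(\cdot,\cdot)$ on $\mathfrak p$, this is equivalent to \eqref{condition:totally:geodesic:operators}.

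The only mildly delicate point is the injectivity in Step 1, which rests on $L$ and $\langle\cdot,\cdot\rangle_f$ being intrinsically determined by $g$. The rest is bookkeeping with adjoints and the orthogonality of $\mathfrak p$ and $\mathfrak n$.
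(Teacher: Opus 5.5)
Your proposal is correct and follows essentially the same route as the paper. You expand \eqref{general:invariant:product} using $L^*$ and the orthogonality of $\mathfrak p$ and $\mathfrak n$ to read off $\Lambda$, and you rewrite \eqref{condition:totally:geodesic} through $\Lambda_f$ and the $(\cdot,\cdot)$-adjoint of $A_Y^L$, exactly as the paper does. Your explicit injectivity check spells out a point the paper leaves implicit in ``follows from Theorem~\ref{theorem:characterization:Riemannian:submersions}'': you recover $\langle\cdot,\cdot\rangle_f$ as the restriction of $\langle\cdot,\cdot\rangle_g$ to $\mathfrak p$, and $L$ from $\mathfrak p^\perp=\{LY+Y:Y\in\mathfrak n\}$. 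One small point: in Step 3 the symmetry of $\Lambda_f$ is not actually needed, only the definition of the adjoint.
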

\begin{proof}
Observe that \eqref{general:invariant:product} can be written in terms of metric operators as
\begin{equation}\label{general:invariant:product:compact:case}
(\Lambda X,Y)=(\Lambda_f(X_\mathfrak{p}-LX_\mathfrak{n}),Y_\mathfrak{p}-LY_\mathfrak{n})+(\Lambda_bX_\mathfrak{n},Y_\mathfrak{n}),\ \forall X,Y\in\mathfrak{m}.
\end{equation}
On the other hand,
\[
\begin{aligned}
&
(\Lambda_f(X_\mathfrak{p}-LX_\mathfrak{n}),Y_\mathfrak{p}-LY_\mathfrak{n})
+
(\Lambda_bX_\mathfrak{n},Y_\mathfrak{n})\\
=&
(\Lambda_f(X_\mathfrak{p}-LX_\mathfrak{n}),Y_\mathfrak{p})
-
(\Lambda_f(X_\mathfrak{p}-LX_\mathfrak{n}),LY_\mathfrak{n})
+
(\Lambda_bX_\mathfrak{n},Y_\mathfrak{n})\\
=&
(\Lambda_f(X_\mathfrak{p}-LX_\mathfrak{n}),Y_\mathfrak{p})
+
(\Lambda_bX_\mathfrak{n}-L^*\Lambda_f(X_\mathfrak{p}-LX_\mathfrak{n}),Y_\mathfrak{n})\\
=&
\left(
\Lambda_f(X_\mathfrak{p}-LX_\mathfrak{n})
+
\Lambda_bX_\mathfrak{n}
-
L^*\Lambda_f(X_\mathfrak{p}-LX_\mathfrak{n}),
Y
\right).
\end{aligned}
\]
Therefore, \eqref{general:invariant:product:compact:case} holds for all $X,Y\in\mathfrak{m}$ if and only if 
\[
\Lambda X
=
\Lambda_f(X_\mathfrak{p}-LX_\mathfrak{n})
+
\Lambda_bX_\mathfrak{n}
-
L^*\Lambda_f(X_\mathfrak{p}-LX_\mathfrak{n}),
\]
for all $X\in\mathfrak{m}.$ The first assertion now follows from Theorem~\ref{theorem:characterization:Riemannian:submersions}.\\

On the other hand, by Theorem~\ref{theorem:characterization:Riemannian:submersions:totally:geodesic} $a)$, the
fibers of $\pi$ are totally geodesic if and only if
\[
\langle A_Y^L(U),V\rangle_f+\langle U,A_Y^L(V)\rangle_f=0
\]
for all $U,V\in\mathfrak{p}$ and $Y\in\mathfrak{n}$. Since $\langle U,V\rangle_f=(\Lambda_f U,V),$ this condition is equivalent to
\begin{align*}
(\Lambda_f A_Y^L(U),V)+(\Lambda_f U,A_Y^L(V))=0\Longleftrightarrow\left((\Lambda_f A_Y^L+(A_Y^L)^*\Lambda_f)U,V\right)=0,
\end{align*}
for all $U,V\in\mathfrak{p}$ and $Y\in\mathfrak{n}$. Therefore, the fibers of $\pi$ are totally geodesic if and only if \eqref{condition:totally:geodesic:operators} holds for all $Y\in\mathfrak{n}.$
\end{proof}

\section{$\pi$-Equigeodesic vectors}
In this section, we fix a $G$-invariant metric $g_b$ on $G/H$ and use the description obtained in the previous section to study geodesic vectors on $G/K$ in terms of their fiber and base components.
\begin{theorem}\label{theorem:geodesic:criterion:fibration}
Let \(g\) be a \(G\)-invariant metric on \(G/K\) such that
\[
\pi:(G/K,g)\longrightarrow (G/H,g_b)
\]
is a Riemannian submersion. Assume that $\langle\cdot,\cdot\rangle_g$ is given by \eqref{general:invariant:product}. Then \(X\in\mathfrak p\oplus\mathfrak n\) is a geodesic vector on \((G/K,g)\) if and only if
\begin{equation}\label{geodesic:pi-lemma}
\left\{\begin{array}{l}
\left\langle
[X_\mathfrak{p},Z]_\mathfrak{p}
-
L[X_\mathfrak{n},Z],
X_\mathfrak{p}-LX_\mathfrak{n}
\right\rangle_f=0,
\ \forall\,Z\in\mathfrak p,\\[0.8em]
\begin{aligned}
&
\left\langle
[X_\mathfrak{n},Y]_\mathfrak{p}
-
L\big([X_\mathfrak{p},Y]+[X_\mathfrak{n},Y]_\mathfrak{n}\big),
X_\mathfrak{p}-LX_\mathfrak{n}
\right\rangle_f\\
&+
\left\langle
[X_\mathfrak{p},Y]
+
[X_\mathfrak{n},Y]_\mathfrak{n},
X_\mathfrak{n}
\right\rangle_b=0,\ \forall\,Y\in\mathfrak n.
\end{aligned}
\end{array}
\right.
\end{equation}
\noindent In particular, if $G$ is compact and $(\cdot,\cdot)$ is an $\operatorname{Ad}(G)$-invariant inner product on $\mathfrak{g},$ then \eqref{geodesic:pi-lemma} is equivalent to
\begin{equation}\label{geodesic:pi-lemma:compact}
\left\{
\begin{array}{l}
\left[
X_\mathfrak{p},
\Lambda_f(X_\mathfrak{p}-LX_\mathfrak{n})
\right]_\mathfrak{p}
-
\left[
X_\mathfrak{n},
L^*\Lambda_f(X_\mathfrak{p}-LX_\mathfrak{n})
\right]_\mathfrak{p}
=0,
\\[0.8em]
\begin{aligned}
&
\left[
X_\mathfrak{p},
\Lambda_bX_\mathfrak{n}
-
L^*\Lambda_f(X_\mathfrak{p}-LX_\mathfrak{n})
\right]\\
&+
\left[
X_\mathfrak{n},
\Lambda_f(X_\mathfrak{p}-LX_\mathfrak{n})
\right]
+
\left[
X_\mathfrak{n},
\Lambda_bX_\mathfrak{n}
-
L^*\Lambda_f(X_\mathfrak{p}-LX_\mathfrak{n})
\right]_\mathfrak{n}=0,
\end{aligned}
\end{array}
\right.
\end{equation}
where \(L^*:\mathfrak p\to\mathfrak n\) denotes the adjoint of \(L\) with respect to \((\cdot,\cdot)\).
\end{theorem}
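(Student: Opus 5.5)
The plan is to apply Theorem~\ref{KV:theorem} directly. $X\in\mathfrak m=\mathfrak p\oplus\mathfrak n$ is geodesic if and only if $\langle [X,W]_\mathfrak m, X\rangle_g=0$ for all $W\in\mathfrak m$. Since this is linear in $W$, it suffices to test separately with $W=Z\in\mathfrak p$ and with $W=Y\in\mathfrak n$. We then evaluate $\langle\cdot,\cdot\rangle_g$ through \eqref{general:invariant:product}. For any $V\in\mathfrak m$,
\[
\langle V,X\rangle_g=\langle V_\mathfrak p-LV_\mathfrak n,\,X_\mathfrak p-LX_\mathfrak n\rangle_f+\langle V_\mathfrak n,X_\mathfrak n\rangle_b ,
\]
so the work reduces to computing the $\mathfrak p$- and $\mathfrak n$-components of $[X,W]_\mathfrak m$.

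The bracket components follow from the reductive structure: $[\mathfrak h,\mathfrak h]\subseteq\mathfrak h$, $[\mathfrak h,\mathfrak n]\subseteq\mathfrak n$ (by $\operatorname{Ad}(H)$-invariance of $\mathfrak n$), and $\mathfrak h\cap\mathfrak m=\mathfrak p$.

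For $W=Z\in\mathfrak p$, we have $[X_\mathfrak p,Z]\in\mathfrak h$ and $[X_\mathfrak n,Z]\in\mathfrak n$. Hence the $\mathfrak p$-component is $[X_\mathfrak p,Z]_\mathfrak p$ and the $\mathfrak n$-component is $[X_\mathfrak n,Z]$. Substituting gives the $\mathfrak p$-term $\langle [X_\mathfrak p,Z]_\mathfrak p-L[X_\mathfrak n,Z],X_\mathfrak p-LX_\mathfrak n\rangle_f$ plus the base term $\langle [X_\mathfrak n,Z],X_\mathfrak n\rangle_b$. The base term vanishes: writing $[X_\mathfrak n,Z]=-\operatorname{ad}(Z)X_\mathfrak n$ with $Z\in\mathfrak h$, the $\operatorname{Ad}(H)$-invariance of $\langle\cdot,\cdot\rangle_b$ makes $\operatorname{ad}(Z)|_\mathfrak n$ skew-symmetric, so $\langle \operatorname{ad}(Z)X_\mathfrak n,X_\mathfrak n\rangle_b=0$. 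This gives the first equation. Recognising this cancellation is the one non-mechanical point, and it relies on $g_b$ being $H$-invariant, which holds since $g_b$ is $G$-invariant.

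For $W=Y\in\mathfrak n$, we have $[X_\mathfrak p,Y]\in\mathfrak n$, while $[X_\mathfrak n,Y]$ splits as $[X_\mathfrak n,Y]_\mathfrak k+[X_\mathfrak n,Y]_\mathfrak p+[X_\mathfrak n,Y]_\mathfrak n$. So $[X,Y]_\mathfrak m$ has $\mathfrak p$-part $[X_\mathfrak n,Y]_\mathfrak p$ and $\mathfrak n$-part $[X_\mathfrak p,Y]+[X_\mathfrak n,Y]_\mathfrak n$. Substituting gives exactly the second equation of \eqref{geodesic:pi-lemma}.

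For the compact reformulation, we would rather not redo this computation and instead invoke Proposition~\ref{CGN:theorem}: $X$ is geodesic if and only if $[X,\Lambda X]_\mathfrak m=0$, with $\Lambda$ given by \eqref{general:metric:operator}. Write $A:=\Lambda_f(X_\mathfrak p-LX_\mathfrak n)\in\mathfrak p$ and $B:=\Lambda_bX_\mathfrak n-L^*A\in\mathfrak n$, so that $\Lambda X=A+B$. Then
\[
[X,\Lambda X]=[X_\mathfrak p,A]+[X_\mathfrak p,B]+[X_\mathfrak n,A]+[X_\mathfrak n,B].
\]
Here $[X_\mathfrak p,A]\in\mathfrak h$ and $[X_\mathfrak p,B],[X_\mathfrak n,A]\in\mathfrak n$, while $[X_\mathfrak n,B]$ has components in every summand. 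Projecting, the $\mathfrak p$-component is $[X_\mathfrak p,A]_\mathfrak p+[X_\mathfrak n,B]_\mathfrak p$, and the $\mathfrak n$-component is $[X_\mathfrak p,B]+[X_\mathfrak n,A]+[X_\mathfrak n,B]_\mathfrak n$, which is the second line of \eqref{geodesic:pi-lemma:compact}.

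The $\mathfrak p$-component still contains $[X_\mathfrak n,\Lambda_bX_\mathfrak n]_\mathfrak p$, which the first line of \eqref{geodesic:pi-lemma:compact} omits. To remove it, use $\operatorname{Ad}(G)$-invariance of $(\cdot,\cdot)$: for $U\in\mathfrak h$, $([X_\mathfrak n,\Lambda_bX_\mathfrak n],U)=-(\Lambda_bX_\mathfrak n,[X_\mathfrak n,U])$, and this vanishes because $\Lambda_b$ is symmetric and commutes with $\operatorname{ad}(U)|_\mathfrak n$, the latter being skew. Hence $[X_\mathfrak n,\Lambda_bX_\mathfrak n]_\mathfrak h=0$. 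The first line then reads $[X_\mathfrak p,A]_\mathfrak p-[X_\mathfrak n,L^*A]_\mathfrak p=0$, as claimed.

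The main obstacle is tracking the projections carefully and spotting these two skew-symmetry cancellations, one for $g_b$ and one for $\Lambda_b$.
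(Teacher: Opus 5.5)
Your proposal is correct and follows essentially the same route as the paper. You apply the Kowalski--Vanhecke criterion separately on $Z\in\mathfrak p$ and $Y\in\mathfrak n$, killing the base term $\langle [X_\mathfrak n,Z],X_\mathfrak n\rangle_b$ by $\operatorname{Ad}(H)$-invariance. You then apply Proposition~\ref{CGN:theorem} to the metric operator \eqref{general:metric:operator} and remove $[X_\mathfrak n,\Lambda_bX_\mathfrak n]_\mathfrak p$ via $\operatorname{Ad}(G)$-invariance of $(\cdot,\cdot)$. Your form of that last cancellation (a symmetric $\Lambda_b$ commuting with the skew map $\operatorname{ad}(U)|_\mathfrak n$) is just the operator version of the paper's identity $(\Lambda_bX_\mathfrak n,[X_\mathfrak n,U])=\langle X_\mathfrak n,[X_\mathfrak n,U]\rangle_b=0$, and it even yields the slightly stronger $[X_\mathfrak n,\Lambda_bX_\mathfrak n]_\mathfrak h=0$.
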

\begin{proof}
By Theorem~\ref{KV:theorem}, \(X=X_\mathfrak{p}+X_\mathfrak{n}\in\mathfrak{p}\oplus\mathfrak{n}\) is a geodesic vector on \((G/K,g)\) if and only if
\[
\langle [X,V]_{\mathfrak p\oplus\mathfrak n},X\rangle_g=0,
\ \forall\,V\in\mathfrak p\oplus\mathfrak n.
\]
Write \(V=Z+Y\), with \(Z\in\mathfrak{p}\) and \(Y\in\mathfrak{n}\). Since \(\mathfrak{n}\) is \(\operatorname{Ad}(H)\)-invariant and \(\mathfrak{p}\subseteq\mathfrak{h}\), we have $[\mathfrak{p},\mathfrak{n}]\subseteq\mathfrak n.$ Also, since \(\mathfrak{h}=\mathfrak{k}\oplus\mathfrak {p}\), we have $[\mathfrak{p},\mathfrak {p}]\subseteq\mathfrak{h}.$
Therefore
\[
[X,Z]_{\mathfrak p\oplus\mathfrak n}
=
[X_\mathfrak p,Z]_\mathfrak p+[X_\mathfrak n,Z],
\]
where \([X_\mathfrak p,Z]_\mathfrak p\in\mathfrak p\) and \([X_\mathfrak n,Z]\in\mathfrak n\). By \eqref{general:invariant:product},
\[
\begin{aligned}
\langle [X,Z]_{\mathfrak p\oplus\mathfrak n},X\rangle_g&=\left\langle
[X_\mathfrak p,Z]_\mathfrak p-L[X_\mathfrak n,Z],X_\mathfrak p-LX_\mathfrak n\right\rangle_f+\left\langle [X_\mathfrak n,Z],X_\mathfrak n\right\rangle_b.
\end{aligned}
\]
Since $Z\in\mathfrak{p}\subseteq \mathfrak{h}$ and $\langle\cdot,\cdot\rangle_b$ is $\mathrm{Ad}(H)$-invariant, then 
\begin{align*}
    2\langle[X_\mathfrak{n},Z],X_\mathfrak{n}\rangle_b=-\left(\langle[Z,X_\mathfrak{n}],X_\mathfrak{n}\rangle_b+\langle X_\mathfrak{n},[Z,X_\mathfrak{n}]\rangle_b\right)=0.
\end{align*} Hence $\langle [X,Z]_{\mathfrak{p}\oplus\mathfrak{n}},X\rangle_g=0$ for all $Z\in\mathfrak{p}$
is equivalent to
\[
\left\langle
[X_\mathfrak{p},Z]_\mathfrak{p}
-
L[X_\mathfrak{n},Z],
X_\mathfrak{p}-LX_\mathfrak{n}
\right\rangle_f=0
\ \forall\,Z\in\mathfrak p.
\]
\noindent On the other hand, $[X,Y]_{\mathfrak{p}\oplus\mathfrak{n}}=[X_\mathfrak{n},Y]_\mathfrak{p}+[X_\mathfrak{p},Y]+[X_\mathfrak{n},Y]_\mathfrak{n},$
because \([X_\mathfrak p,Y]\in\mathfrak n\). Therefore
\[
\begin{aligned}
\langle [X,Y]_{\mathfrak{p}\oplus\mathfrak{n}},X\rangle_g
={}&
\left\langle[X_\mathfrak{n},Y]_\mathfrak{p}-L\big([X_\mathfrak{p},Y]+[X_\mathfrak {n},Y]_\mathfrak{n}\big),
X_\mathfrak{p}-LX_\mathfrak{n}
\right\rangle_f\\
&+\left\langle
[X_\mathfrak{p},Y]+[X_\mathfrak{n},Y]_\mathfrak{n},
X_\mathfrak{n}
\right\rangle_b.
\end{aligned}
\]
Thus, $\langle [X,Y]_{\mathfrak{p}\oplus\mathfrak{n}},X\rangle_g=0,\ \forall\,Y\in\mathfrak{n}$ if and only if 
$$\begin{aligned}
&
\left\langle[X_\mathfrak{n},Y]_\mathfrak{p}-L\big([X_\mathfrak{p},Y]+[X_\mathfrak {n},Y]_\mathfrak{n}\big),
X_\mathfrak{p}-LX_\mathfrak{n}
\right\rangle_f\\
&+\left\langle
[X_\mathfrak{p},Y]+[X_\mathfrak{n},Y]_\mathfrak{n},
X_\mathfrak{n}
\right\rangle_b=0.
\end{aligned}$$
We have proven that the condition $\langle[X,V]_{\mathfrak{p}\oplus\mathfrak{n}},X\rangle_g=0$ for all $V\in\mathfrak{p}\oplus\mathfrak{n}$ is equivalent to \eqref{geodesic:pi-lemma}. Now, assume that \(G\) is compact. Then \(X\in\mathfrak{p}\oplus\mathfrak{n}\) is a geodesic vector if and only if $[X,\Lambda X]_{\mathfrak p\oplus\mathfrak n}=0.$ Using \eqref{general:metric:operator} we have
\[
\begin{aligned}
[X,\Lambda X]
={}&\left[X_\mathfrak{p},
\Lambda_f(X_\mathfrak{p}-LX_\mathfrak{n})\right]\\
&+\left[
X_\mathfrak{p},\Lambda_bX_\mathfrak{n}
-L^*\Lambda_f(X_\mathfrak{p}-LX_\mathfrak{n})\right]\\
&+\left[X_\mathfrak{n},
\Lambda_f(X_\mathfrak{p}-LX_\mathfrak{n})\right]\\
&+\left[X_\mathfrak{n},
\Lambda_bX_\mathfrak{n}-L^*\Lambda_f(X_\mathfrak{p}-LX_\mathfrak{n})
\right].
\end{aligned}
\]
Again, $[\mathfrak{p},\mathfrak{p}]\subseteq\mathfrak{h}$ and $[\mathfrak{p},\mathfrak{n}]\subseteq\mathfrak{n}$ so the $\mathfrak{p}$-component of $[X,\Lambda X]_{\mathfrak{p}\oplus\mathfrak{n}}$ is
\[
\left[
X_\mathfrak{p},
\Lambda_f(X_\mathfrak{p}-LX_\mathfrak{n})
\right]_\mathfrak{p}
+\left[X_\mathfrak{n},
\Lambda_bX_\mathfrak{n}-L^*\Lambda_f(X_\mathfrak{p}-LX_\mathfrak{n})
\right]_\mathfrak{p},
\]
and its \(\mathfrak n\)-component is
\[
\begin{aligned}
&\left[X_\mathfrak{p},\Lambda_bX_\mathfrak{n}-L^*\Lambda_f(X_\mathfrak{p}-LX_\mathfrak{n})
\right]+\left[X_\mathfrak{n},\Lambda_f(X_\mathfrak{p}-LX_\mathfrak{n})
\right]+\left[X_\mathfrak{n},\Lambda_bX_\mathfrak{n}-L^*\Lambda_f(X_\mathfrak{p}-LX_\mathfrak{n})
\right]_\mathfrak{n}.
\end{aligned}
\]
Therefore, \([X,\Lambda X]_{\mathfrak p\oplus\mathfrak n}=0\) is equivalent to 
\begin{equation*}
\left\{
\begin{array}{l}
\left[
X_\mathfrak{p},
\Lambda_f(X_\mathfrak{p}-LX_\mathfrak{n})
\right]_\mathfrak{p}
+
\left[
X_\mathfrak{n},\Lambda_bX_\mathfrak{n}-
L^*\Lambda_f(X_\mathfrak{p}-LX_\mathfrak{n})
\right]_\mathfrak{p}
=0,
\\[0.8em]
\begin{aligned}
&
\left[
X_\mathfrak{p},
\Lambda_bX_\mathfrak{n}
-
L^*\Lambda_f(X_\mathfrak{p}-LX_\mathfrak{n})
\right]\\
&+
\left[
X_\mathfrak{n},
\Lambda_f(X_\mathfrak{p}-LX_\mathfrak{n})
\right]
+
\left[
X_\mathfrak{n},
\Lambda_bX_\mathfrak{n}
-
L^*\Lambda_f(X_\mathfrak{p}-LX_\mathfrak{n})
\right]_\mathfrak{n}=0,
\end{aligned}
\end{array}
\right.
\end{equation*}
It remains to show that $[X_\mathfrak{n},\Lambda_bX_\mathfrak{n}]_\mathfrak{p}=0.$ In fact, for all $Z\in\mathfrak p$ we have
\begin{align*}
([X_\mathfrak{n},\Lambda_bX_\mathfrak{n}]_\mathfrak{p},Z)
&=
([X_\mathfrak{n},\Lambda_bX_\mathfrak{n}],Z)\ \textnormal{(since $\mathfrak{k},\mathfrak{p},\mathfrak{n}$ are pairwise orthogonal)}\\
&=-(\Lambda_bX_{\mathfrak{n}},[X_\mathfrak{n},Z])\\
&=-\langle X_\mathfrak{n},[X_\mathfrak{n},Z]\rangle_b\\
&=0,
\end{align*}
where the last equality follows from the fact that $Z\in\mathfrak{p}\subseteq \mathfrak{h}$ and $\langle\cdot,\cdot\rangle_b$ is $\operatorname{Ad}(H)$-invariant. Thus, $[X_\mathfrak{n},\Lambda_bX_\mathfrak{n}]_\mathfrak{p}=0.$ The proof is complete.
\end{proof}
We now record some consequences of Theorem~\ref{theorem:geodesic:criterion:fibration}. First, we consider the case in which $\operatorname{Hom}_K(\mathfrak{n},\mathfrak{p})=0.$ Although the result follows directly from Theorem~\ref{theorem:geodesic:criterion:fibration}, in this case the criterion takes a simpler form.

\begin{corollary}\label{Corollary:4.2}
Assume that $\operatorname{Hom}_K(\mathfrak{n},\mathfrak{p})=0.$ Let $g$ be a $G$-invariant metric on $G/K$ such that
\[
\pi:(G/K,g)\longrightarrow (G/H,g_b)
\]
is a Riemannian submersion. Then $X\in\mathfrak{p}\oplus\mathfrak{n}$ is a geodesic vector on $(G/K,g)$ if and only if
\begin{equation}\label{criterion:Hom=0}
\left\{
\begin{array}{l}
\left\langle [X_\mathfrak{p},Z]_\mathfrak{p},X_\mathfrak{p}\right\rangle_f=0,\ \forall Z\in\mathfrak{p},\\[0.5em]
\left\langle [X_\mathfrak{n},Y]_\mathfrak{p},X_\mathfrak{p}\right\rangle_f
+
\left\langle [X_\mathfrak{p},Y]+[X_\mathfrak{n},Y]_\mathfrak{n},X_\mathfrak{n}\right\rangle_b=0,\ \forall Y\in\mathfrak{n}.
\end{array}
\right.
\end{equation}
In particular, if $X$ is a geodesic vector on $(G/K,g),$ then its fiber component $X_\mathfrak{p}$ is a geodesic vector on $(H/K,g_f).$ Conversely, if $X\in \mathfrak{p}$ is a geodesic vector on $(H/K,g_f),$ then $X$ is a geodesic vector on $(G/K,g).$  If, in addition, $G$ is compact and $(\cdot,\cdot)$ is an $\operatorname{Ad}(G)$-invariant inner product on $\mathfrak{g},$ then \eqref{criterion:Hom=0} is equivalent to
\begin{equation}\label{criterion:compact:Hom=0}
\left\{
\begin{array}{l}
[X_\mathfrak{p},\Lambda_fX_\mathfrak{p}]_\mathfrak{p}=0,\\[0.5em]
[X_\mathfrak{p},\Lambda_bX_\mathfrak{n}]
+
[X_\mathfrak{n},\Lambda_fX_\mathfrak{p}]
+
[X_\mathfrak{n},\Lambda_bX_\mathfrak{n}]_\mathfrak{n}=0.
\end{array}
\right.
\end{equation}
\end{corollary}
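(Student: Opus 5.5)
The plan is to specialize Theorem~\ref{theorem:geodesic:criterion:fibration} to $L=0$, which is the only possibility because $\operatorname{Hom}_K(\mathfrak{n},\mathfrak{p})=0$. By Theorem~\ref{theorem:characterization:Riemannian:submersions}, every $G$-invariant metric $g$ on $G/K$ making $\pi$ a Riemannian submersion is given by \eqref{general:invariant:product} with this $L$. So we may put $L=0$ in \eqref{geodesic:pi-lemma}.

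In the first equation, $L[X_\mathfrak{n},Z]$ and $LX_\mathfrak{n}$ drop out, leaving $\langle[X_\mathfrak{p},Z]_\mathfrak{p},X_\mathfrak{p}\rangle_f=0$. In the second equation, the $L(\cdots)$ term and $LX_\mathfrak{n}$ vanish, and we obtain exactly the second line of \eqref{criterion:Hom=0}. This establishes the equivalence with \eqref{criterion:Hom=0}.

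For the \emph{in particular} statements, I will apply Theorem~\ref{KV:theorem} to the reductive space $H/K$ with decomposition $\mathfrak{h}=\mathfrak{k}\oplus\mathfrak{p}$ and inner product $\langle\cdot,\cdot\rangle_f$. It says that $X_\mathfrak{p}$ is geodesic on $(H/K,g_f)$ if and only if $\langle[X_\mathfrak{p},Z]_\mathfrak{p},X_\mathfrak{p}\rangle_f=0$ for all $Z\in\mathfrak{p}$, and this is precisely the first line of \eqref{criterion:Hom=0}. That gives the forward implication.

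For the converse, take $X\in\mathfrak{p}$, so $X_\mathfrak{n}=0$. The first line of \eqref{criterion:Hom=0} holds by assumption. In the second line, the terms involving $X_\mathfrak{n}$ vanish, and $\langle[X_\mathfrak{p},Y],0\rangle_b=0$. Hence the second line holds trivially and $X$ is geodesic on $(G/K,g)$.

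For the compact reformulation, I substitute $L=0$ (so $L^*=0$) into \eqref{geodesic:pi-lemma:compact}. This gives $[X_\mathfrak{p},\Lambda_fX_\mathfrak{p}]_\mathfrak{p}=0$ together with $[X_\mathfrak{p},\Lambda_bX_\mathfrak{n}]+[X_\mathfrak{n},\Lambda_fX_\mathfrak{p}]+[X_\mathfrak{n},\Lambda_bX_\mathfrak{n}]_\mathfrak{n}=0$, which is \eqref{criterion:compact:Hom=0}. The equivalence with \eqref{criterion:Hom=0} then follows because both systems are equivalent to $X$ being geodesic, by Theorem~\ref{theorem:geodesic:criterion:fibration}.

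There is no real obstacle here. The only point needing care is justifying that $L$ must vanish, which is immediate from the hypothesis $\operatorname{Hom}_K(\mathfrak{n},\mathfrak{p})=0$ together with $L\in\operatorname{Hom}_K(\mathfrak{n},\mathfrak{p})$ in Theorem~\ref{theorem:characterization:Riemannian:submersions}$\,a)$.
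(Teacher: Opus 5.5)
Your proposal is correct, and for the equivalence with \eqref{criterion:Hom=0}, the forward \emph{in particular} claim, and the compact reformulation it is exactly the paper's argument. In each case $L=0$ is forced, you substitute it into Theorem~\ref{theorem:geodesic:criterion:fibration}, and you read the first line as the criterion of Theorem~\ref{KV:theorem} on $H/K$.

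The only genuine difference is the converse. You check it algebraically: for $X\in\mathfrak{p}$, every term in the second line of \eqref{criterion:Hom=0} contains $X_\mathfrak{n}=0$, so the system reduces to the fiber condition. The paper argues geometrically instead: $L=0$ makes the fibers totally geodesic (Theorem~\ref{theorem:characterization:Riemannian:submersions:totally:geodesic}), so a geodesic of $(H/K,g_f)$ is a geodesic of $(G/K,g)$.

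Your version is more elementary and self-contained. It avoids the tacit facts behind the geometric argument, namely that $t\mapsto\exp(tX)K$ stays in the fiber $\pi^{-1}(eH)=H/K$ and that $g$ induces $g_f$ on it. The paper's version explains \emph{why} the converse holds. It also extends verbatim to any pair $(g_f,L)$ with $L\neq0$ satisfying \eqref{condition:totally:geodesic}. There your direct computation would leave the extra term $-\langle L[X_\mathfrak{p},Y],X_\mathfrak{p}\rangle_f$, which would have to be shown to vanish separately.
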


\begin{proof}
Since $\operatorname{Hom}_K(\mathfrak{n},\mathfrak{p})=0,$ we have $L=0.$ Therefore, equations \eqref{criterion:Hom=0} and \eqref{criterion:compact:Hom=0} follow directly by setting $L=0$ in \eqref{geodesic:pi-lemma} and \eqref{geodesic:pi-lemma:compact}, respectively. The first equation of \eqref{criterion:Hom=0} is precisely the condition of Theorem~\ref{KV:theorem} for $X_\mathfrak{p}$ to be a geodesic vector on $(H/K,g_f).$ Finally, since $L=0$, by Theorem~\ref{theorem:characterization:Riemannian:submersions:totally:geodesic} the fibers of $\pi:(G/K,g)\longrightarrow (G/H,g_b)$ are totally geodesic. Therefore, if $X\in\mathfrak{p}$ is a geodesic vector on $(H/K,g_f),$ it is a geodesic vector on $(G/K,g).$
\end{proof}
\begin{remark}
The previous corollary can be used to construct geodesic vectors on $G/K$ from geodesic vectors in the fiber and in the base, which are homogeneous spaces of lower dimension. In fact, assume that \(\operatorname{Hom}_K(\mathfrak n,\mathfrak p)=0\), and let $X=X_\mathfrak{p}+X_\mathfrak{n}\in\mathfrak{p}\oplus\mathfrak{n}.$ Suppose that $X_\mathfrak{p}$ is a geodesic vector on $(H/K,g_f)$ and that $X_\mathfrak{n}$ is a geodesic vector on $(G/H,g_b).$ Then it follows from \eqref{criterion:Hom=0} that $X$ is a geodesic vector on $(G/K,g)$ if and only if
\begin{equation*}
\left\langle [X_\mathfrak{n},Y]_\mathfrak{p},X_\mathfrak{p}\right\rangle_f
+
\left\langle [X_\mathfrak{p},Y],X_\mathfrak{n}\right\rangle_b=0,\ \forall Y\in\mathfrak{n}.
\end{equation*}
If, in addition, $G$ is compact, the same condition can be written in terms of the metric operators as
\begin{equation*}
[X_\mathfrak{p},\Lambda_bX_\mathfrak{n}]
+
[X_\mathfrak{n},\Lambda_fX_\mathfrak{p}]
=0.
\end{equation*}\qed
\end{remark}

We now define $\pi$-equigeodesic vectors.

\begin{definition}
A vector $X\in\mathfrak{p}\oplus\mathfrak{n}$ is called a {\it $\pi$-equigeodesic vector} with respect to $g_b$ if $X$ is a geodesic vector on $(G/K,g)$ for every $G$-invariant metric $g$ on $G/K$ for which
\[
\pi:(G/K,g)\longrightarrow (G/H,g_b)
\]
is a Riemannian submersion. In this case, the curve $\gamma_X(t)=\exp(tX)K$ is called a {\it homogeneous $\pi$-equigeodesic} with respect to $g_b.$
\end{definition}
\begin{theorem}\label{theorem:pi-equigeodesic:criterion}
Assume that $G$ is compact, and let $X\in\mathfrak{p}\oplus\mathfrak{n}.$ Then $X$ is a $\pi$-equigeodesic vector with respect to $g_b$ if and only if, for every $L\in\operatorname{Hom}_K(\mathfrak{n},\mathfrak{p})$ and every metric operator $\Lambda_f:\mathfrak{p}\to\mathfrak{p}$ of an $H$-invariant metric on $H/K$, the following conditions hold:
\begin{equation}\label{7:equations}
\left\{
\begin{array}{l}
[X_\mathfrak{p},\Lambda_fX_\mathfrak{p}]_\mathfrak{p}=[X_\mathfrak{n},L^*\Lambda_fLX_\mathfrak{n}]_\mathfrak{p}=[X_\mathfrak{n},\Lambda_fX_\mathfrak{p}]=0,\\[0.5em]
[X_\mathfrak{p},\Lambda_fLX_\mathfrak{n}]_\mathfrak{p}+[X_\mathfrak{n},L^*\Lambda_fX_\mathfrak{p}]_\mathfrak{p}=0,\\[0.5em]
[X_\mathfrak{p},\Lambda_bX_\mathfrak{n}]
+
[X_\mathfrak{n},\Lambda_bX_\mathfrak{n}]_\mathfrak{n}=0,\\[0.5em]
[X_\mathfrak{n},\Lambda_fLX_\mathfrak{n}]
+
[X_\mathfrak{p},L^*\Lambda_fX_\mathfrak{p}]
+
[X_\mathfrak{n},L^*\Lambda_fX_\mathfrak{p}]_\mathfrak{n}=0,\\[0.5em]
[X_\mathfrak{p},L^*\Lambda_fLX_\mathfrak{n}]
+
[X_\mathfrak{n},L^*\Lambda_fLX_\mathfrak{n}]_\mathfrak{n}=0.
\end{array}
\right.
\end{equation}
In particular, if $\operatorname{Hom}_K(\mathfrak{n},\mathfrak{p})=0$, then $X$ is a $\pi$-equigeodesic vector with respect to $g_b$ if and only if, for every metric operator $\Lambda_f:\mathfrak{p}\to\mathfrak{p}$ of an $H$-invariant metric on $H/K$,
\begin{equation}\label{3:equations}
\left\{
\begin{array}{l}
[X_\mathfrak{p},\Lambda_fX_\mathfrak{p}]_\mathfrak{p}=[X_\mathfrak{n},\Lambda_fX_\mathfrak{p}]=0,\\[0.5em]
[X_\mathfrak{p},\Lambda_bX_\mathfrak{n}]
+
[X_\mathfrak{n},\Lambda_bX_\mathfrak{n}]_\mathfrak{n}=0.
\end{array}
\right.
\end{equation}
\end{theorem}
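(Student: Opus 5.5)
By Corollary~\ref{corollary:compact:general:metric:operator}, $X$ is $\pi$-equigeodesic exactly when the system \eqref{geodesic:pi-lemma:compact} holds for every pair $(\Lambda_f,L)$. Here $\Lambda_f$ ranges over metric operators of $H$-invariant metrics on $H/K$, and $L$ ranges over $\operatorname{Hom}_K(\mathfrak n,\mathfrak p)$. So the plan is to expand \eqref{geodesic:pi-lemma:compact}, viewed as a function of $(\Lambda_f,L)$, and separate it into homogeneous pieces using scaling freedom.

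First I note the symmetries available. If $\Lambda_f$ is admissible, so is $s\Lambda_f$ for every $s>0$. If $L$ is admissible, so is $tL$ for every $t\in\mathbb R$, since $\operatorname{Hom}_K(\mathfrak n,\mathfrak p)$ is a vector space. Next I expand \eqref{geodesic:pi-lemma:compact} with $(s\Lambda_f,tL)$ in place of $(\Lambda_f,L)$. Each equation becomes a polynomial in $(s,t)$ of the form $A+s(B_0+tB_1+t^2B_2)$, with vector coefficients.

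For the $\mathfrak p$-equation, the $s$-linear part is
\[
[X_\mathfrak p,\Lambda_fX_\mathfrak p]_\mathfrak p
-t\big([X_\mathfrak p,\Lambda_fLX_\mathfrak n]_\mathfrak p+[X_\mathfrak n,L^*\Lambda_fX_\mathfrak p]_\mathfrak p\big)
+t^2[X_\mathfrak n,L^*\Lambda_fLX_\mathfrak n]_\mathfrak p .
\]
There is no $s$-free term, because $[X_\mathfrak n,\Lambda_bX_\mathfrak n]_\mathfrak p=0$ was already shown in the proof of Theorem~\ref{theorem:geodesic:criterion:fibration}.

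For the $\mathfrak n$-equation, the $s$-free term is $[X_\mathfrak p,\Lambda_bX_\mathfrak n]+[X_\mathfrak n,\Lambda_bX_\mathfrak n]_\mathfrak n$. The $s$-linear part has coefficient $[X_\mathfrak n,\Lambda_fX_\mathfrak p]$ at $t^0$. At $t^1$ it has coefficient $-\big([X_\mathfrak n,\Lambda_fLX_\mathfrak n]+[X_\mathfrak p,L^*\Lambda_fX_\mathfrak p]+[X_\mathfrak n,L^*\Lambda_fX_\mathfrak p]_\mathfrak n\big)$. At $t^2$ it has coefficient $[X_\mathfrak p,L^*\Lambda_fLX_\mathfrak n]+[X_\mathfrak n,L^*\Lambda_fLX_\mathfrak n]_\mathfrak n$.

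One bracket needs care: $[X_\mathfrak n,\Lambda_f(\cdot)]$ lies entirely in $\mathfrak n$, because $[\mathfrak n,\mathfrak p]\subseteq\mathfrak n$. So it contributes only to the $\mathfrak n$-equation, and its $\mathfrak p$-projection vanishes. This is why \eqref{7:equations} lists $[X_\mathfrak n,\Lambda_fX_\mathfrak p]=0$ without a projection.

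The "if" direction is immediate. If every coefficient above vanishes for all admissible $(\Lambda_f,L)$, then taking $s=t=1$ gives \eqref{geodesic:pi-lemma:compact}.

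For the "only if" direction, fix $(\Lambda_f,L)$ and use that \eqref{geodesic:pi-lemma:compact} holds for all $s>0$ and $t\in\mathbb R$. A polynomial in $(s,t)$ that vanishes on the open set $\{s>0\}\times\mathbb R$ has all coefficients zero. This yields exactly the equations of \eqref{7:equations}:
\begin{itemize}
\item the $s$-free term gives the third line;
\item the $s\,t^0$ terms give $[X_\mathfrak p,\Lambda_fX_\mathfrak p]_\mathfrak p=0$ and $[X_\mathfrak n,\Lambda_fX_\mathfrak p]=0$;
\item the $s\,t^1$ terms give the second and fourth lines;
\item the $s\,t^2$ terms give $[X_\mathfrak n,L^*\Lambda_fLX_\mathfrak n]_\mathfrak p=0$ and the last line.
\end{itemize}

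For the special case $\operatorname{Hom}_K(\mathfrak n,\mathfrak p)=0$, set $L=0$. All $L$-dependent equations become trivial, which leaves \eqref{3:equations}.

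The main obstacle is bookkeeping rather than conceptual. The delicate points are sorting each bracket into its $\mathfrak p$- and $\mathfrak n$-components correctly, using $[\mathfrak p,\mathfrak p]\subseteq\mathfrak h$ and $[\mathfrak p,\mathfrak n]\subseteq\mathfrak n$, and checking that the scaling $L\mapsto tL$, $\Lambda_f\mapsto s\Lambda_f$ stays inside the admissible class.
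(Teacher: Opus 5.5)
Your proposal is correct and follows essentially the same route as the paper. You replace $(\Lambda_f,L)$ by $(s\Lambda_f,tL)$, expand the compact criterion of Theorem~\ref{theorem:geodesic:criterion:fibration} as a polynomial in $s>0$, $t\in\mathbb{R}$, and require all coefficients to vanish; for the converse, you note that the equations for the pair attached to an arbitrary admissible $g$ (via Corollary~\ref{corollary:compact:general:metric:operator}) sum to that criterion.
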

\begin{proof}
Assume that $X$ is a $\pi$-equigeodesic vector with respect to $g_b,$ let $L\in\operatorname{Hom}_K(\mathfrak{n},\mathfrak{p})$ and let $\Lambda_f:\mathfrak{p}\to \mathfrak{p}$ be a metric operator. Let $t\in\mathbb R$ and $s>0.$ Then $tL\in\operatorname{Hom}_K(\mathfrak n,\mathfrak p)$ and $s\Lambda_f$ is again a metric operator. By Theorem~\ref{theorem:geodesic:criterion:fibration}, we have 
\begin{equation*}
\left\{
\begin{array}{l}
\left[
X_\mathfrak{p},
s\Lambda_f(X_\mathfrak{p}-tLX_\mathfrak{n})
\right]_\mathfrak{p}
-ts
\left[
X_\mathfrak{n},
L^*\Lambda_f(X_\mathfrak{p}-tLX_\mathfrak{n})
\right]_\mathfrak{p}
=0,
\\[0.8em]
\begin{aligned}
&
\left[
X_\mathfrak{p},
\Lambda_bX_\mathfrak{n}
-
tsL^*\Lambda_f(X_\mathfrak{p}-tLX_\mathfrak{n})
\right]\\
&+
\left[
X_\mathfrak{n},
s\Lambda_f(X_\mathfrak{p}-tLX_\mathfrak{n})
\right]
+
\left[
X_\mathfrak{n},
\Lambda_bX_\mathfrak{n}
-
tsL^*\Lambda_f(X_\mathfrak{p}-tLX_\mathfrak{n})
\right]_\mathfrak{n}=0.
\end{aligned}
\end{array}
\right.
\end{equation*}
The first equation gives
\[
s[X_\mathfrak{p},\Lambda_fX_\mathfrak{p}]_\mathfrak{p}
-ts\left(
[X_\mathfrak{p},\Lambda_fLX_\mathfrak{n}]_\mathfrak{p}
+
[X_\mathfrak{n},L^*\Lambda_fX_\mathfrak{p}]_\mathfrak{p}
\right)
+t^2s[X_\mathfrak{n},L^*\Lambda_fLX_\mathfrak{n}]_\mathfrak{p}=0.
\]
Since this holds for all $t\in\mathbb R$ and $s>0,$ we obtain
\[
[X_\mathfrak{p},\Lambda_fX_\mathfrak{p}]_\mathfrak{p}=
[X_\mathfrak{p},\Lambda_fLX_\mathfrak{n}]_\mathfrak{p}
+
[X_\mathfrak{n},L^*\Lambda_fX_\mathfrak{p}]_\mathfrak{p}=[X_\mathfrak{n},L^*\Lambda_fLX_\mathfrak{n}]_\mathfrak{p}=0.
\]
On the other hand, the second equation gives
\[
\begin{aligned}
&[X_\mathfrak{p},\Lambda_bX_\mathfrak{n}]
+
[X_\mathfrak{n},\Lambda_bX_\mathfrak{n}]_\mathfrak{n}
+
s[X_\mathfrak{n},\Lambda_fX_\mathfrak{p}]\\
&-ts\left(
[X_\mathfrak{p},L^*\Lambda_fX_\mathfrak{p}]
+
[X_\mathfrak{n},\Lambda_fLX_\mathfrak{n}]
+
[X_\mathfrak{n},L^*\Lambda_fX_\mathfrak{p}]_\mathfrak{n}
\right)\\
&+
t^2s\left(
[X_\mathfrak{p},L^*\Lambda_fLX_\mathfrak{n}]
+
[X_\mathfrak{n},L^*\Lambda_fLX_\mathfrak{n}]_\mathfrak{n}
\right)=0.
\end{aligned}
\]
Since this identity holds for all $t\in\mathbb R$ and $s>0,$ the coefficients vanish. Hence
\begin{align*}
&[X_\mathfrak{n},\Lambda_fX_\mathfrak{p}]=0\\
&[X_\mathfrak{p},\Lambda_bX_\mathfrak{n}]
+
[X_\mathfrak{n},\Lambda_bX_\mathfrak{n}]_\mathfrak{n}=0,\\
&[X_\mathfrak{p},L^*\Lambda_fX_\mathfrak{p}]
+
[X_\mathfrak{n},\Lambda_fLX_\mathfrak{n}]
+
[X_\mathfrak{n},L^*\Lambda_fX_\mathfrak{p}]_\mathfrak{n}=0,\\
&[X_\mathfrak{p},L^*\Lambda_fLX_\mathfrak{n}]
+
[X_\mathfrak{n},L^*\Lambda_fLX_\mathfrak{n}]_\mathfrak{n}=0.
\end{align*}
Therefore, $X$ satisfies \eqref{7:equations}.\\

Conversely, assume that $X$ satisfies \eqref{7:equations}. Let $g$ be a $G$-invariant metric on $G/K$ for which $\pi:(G/K,g)\longrightarrow (G/H,g_b)$ is a Riemannian submersion. By Corollary~\ref{corollary:compact:general:metric:operator}, the metric operator of $g$ is determined by some pair $(\Lambda_f,L)$. For this pair, the equations in \eqref{7:equations} imply \eqref{geodesic:pi-lemma:compact}. Hence, by Theorem~\ref{theorem:geodesic:criterion:fibration}, $X$ is a geodesic vector on $(G/K,g)$. Since $g$ was arbitrary, $X$ is a $\pi$-equigeodesic vector with respect to $g_b$.\\

Finally, if $\operatorname{Hom}_K(\mathfrak n,\mathfrak p)=0,$ then $L=0$. Substituting $L=0$ in \eqref{7:equations} yields \eqref{3:equations}. The proof is complete.
\end{proof}

\begin{remark}\label{remark:consequences:criterion}
In the setting of Theorem~\ref{theorem:pi-equigeodesic:criterion}, we have the following immediate consequences:

\begin{itemize}
\item[$a)$] If $X\in\mathfrak{p}\oplus\mathfrak{n}$ is a $\pi$-equigeodesic vector with respect to $g_b$, then $
[X_\mathfrak{p},\Lambda_fX_\mathfrak{p}]_\mathfrak{p}=0$ for every metric operator $\Lambda_f:\mathfrak p\to\mathfrak p$ of an $H$-invariant metric on $H/K$. Hence, the fiber component $X_\mathfrak{p}$ is an equigeodesic vector on the fiber $H/K$.

\item[$b)$] Let $X_\mathfrak{p}\in\mathfrak p$ be an equigeodesic vector on the fiber $H/K$. Then $X_\mathfrak{p}$, viewed as a vector in $\mathfrak p\oplus\mathfrak n$, is a $\pi$-equigeodesic vector with respect to $g_b$ if and only if
\[
[X_\mathfrak{p},L^*\Lambda_fX_\mathfrak{p}]=0
\]
for every $L\in\operatorname{Hom}_K(\mathfrak n,\mathfrak p)$ and every metric operator $\Lambda_f:\mathfrak p\to\mathfrak p$ of an $H$-invariant metric on $H/K$. Notice that this condition does not involve the metric operator $\Lambda_b$ of the metric on the base. Therefore, if it holds for one $G$-invariant metric $g_b$ on $G/H$, then it holds for every $G$-invariant metric $g_b$ on $G/H$. In particular, if $\operatorname{Hom}_K(\mathfrak n,\mathfrak p)=0$, then every equigeodesic vector on the fiber $H/K$, viewed as a vector in $\mathfrak p\oplus\mathfrak n$, is a $\pi$-equigeodesic vector with respect to any $G$-invariant metric $g_b$ on $G/H$.
\end{itemize}
\end{remark}

\section{Applications}
\subsection{Flag manifolds} Let $n\geq 5,$ let $s\geq 3$ and let $d_1,...,d_s\in\mathbb{N}$ be such that $n=d_1+\cdots+d_s.$ We consider one of the triples $K\subseteq H\subseteq G$ in the following table:
\[
\renewcommand{\arraystretch}{1.5}
\begin{array}{|c|c|c|}
\hline
G & H & K\\
\hline
\operatorname{SO}(n)
&
\operatorname{S}(\operatorname{O}(d_1+d_2)\times \operatorname{O}(d_3)\times\cdots\times \operatorname{O}(d_s))
&
\operatorname{S}(\operatorname{O}(d_1)\times\cdots\times \operatorname{O}(d_s))
\\
\hline
\operatorname{SU}(n)
&
\operatorname{S}(\operatorname{U}(d_1+d_2)\times \operatorname{U}(d_3)\times\cdots\times \operatorname{U}(d_s))
&
\operatorname{S}(\operatorname{U}(d_1)\times\cdots\times \operatorname{U}(d_s))
\\
\hline
\operatorname{Sp}(n)
&
\operatorname{Sp}(d_1+d_2)\times \operatorname{Sp}(d_3)\times\cdots\times \operatorname{Sp}(d_s)
&
\operatorname{Sp}(d_1)\times\cdots\times \operatorname{Sp}(d_s)
\\
\hline
\end{array}
\]
Then we have a homogeneous fibration
\[
H/K\longrightarrow G/K\xlongrightarrow{\pi}G/H.
\]
The Lie algebras of $\operatorname{SO}(n),$ $\mathrm{SU}(n)$ and $\operatorname{Sp}(n)$ can be realized, respectively, as 
\begin{align*}
        &\mathfrak{so}(n)=\{X\in M_n(\mathbb{R}):X+X^*=0\},\\
        &\mathfrak{su}(n)=\{X\in M_n(\mathbb{C}):X+X^*=0\ \textnormal{and}\ \operatorname{Tr}(X)=0\}\ \textnormal{and}\\
        &\mathfrak{sp}(n)=\{X\in M_n(\mathbb{H}):X+X^*=0\},
\end{align*}
where $X^*$ denotes the transpose, the conjugate transpose or the quaternionic conjugate transpose according to whether $X$ belongs to $\mathfrak{so}(n),$ $\mathfrak{su}(n)$ or $\mathfrak{sp}(n),$ respectively. Let 
\[
\mathbb{F}=
\begin{cases}
    \mathbb{R}, &\textnormal{if}\ G=\operatorname{SO}(n),\\
    \mathbb{C}, &\textnormal{if}\ G=\operatorname{SU}(n),\\
    \mathbb{H}, &\textnormal{if}\ G=\operatorname{Sp}(n).
\end{cases}
\]
Given $X\in M_n(\mathbb{F})$, we write $X=(X_{ij})$ as a block matrix, where $X_{ij}\in M_{d_i\times d_j}(\mathbb{F})$ for all $i,j\in\{1,\ldots,s\}$. We fix the $\operatorname{Ad}(G)$-invariant inner product
\begin{equation}\label{inner:product:example:1}
(X,Y):=-\frac{1}{2}\operatorname{Re}\operatorname{Tr}(XY),
\ X,Y\in\mathfrak{g}.
\end{equation}
For $1\le i<j\le s$, define
\[
\mathfrak{m}_{ij}:=\{X\in M_n(\mathbb{F}):X_{k\ell}=0,\ (k,\ell)\notin\{(i,j),(j,i)\}\ \textnormal{and}\ X_{ji}=-X_{ij}^*\}.
\]
With respect to the inner product in \eqref{inner:product:example:1}, we obtain the orthogonal decompositions
\[
\mathfrak{g}=\mathfrak{h}\oplus \mathfrak{n},
\ \mathfrak{h}=\mathfrak{k}\oplus\mathfrak{p},
\]
where
\[
\mathfrak{p}=\mathfrak{m}_{12},
\ \mathfrak{n}=\bigoplus\limits_{\begin{subarray}{c}1\leq i<j\leq s\\(i,j)\neq (1,2)\end{subarray}}\mathfrak{m}_{ij}.
\]
In this case, we have $\operatorname{Hom}_K(\mathfrak n,\mathfrak p)=0.$ Note that the spaces $\mathfrak{m}_{ij}$, $(i,j)\neq (1,2)$, in the decomposition
of $\mathfrak{n}$ are not necessarily $\operatorname{Ad}(H)$-invariant. In fact,
the pairwise non-equivalent irreducible $\operatorname{Ad}(H)$-invariant
subspaces of $\mathfrak{n}$ are
\begin{align*}
&\mathfrak{m}_{1r}\oplus\mathfrak{m}_{2r},\ 3\leq r\leq s,\\
&\mathfrak{m}_{ij},\ 3\leq i<j\leq s.
\end{align*}
Therefore, any metric operator $\Lambda_b:\mathfrak{n}\to\mathfrak{n}$ on
$G/H$ has the form
\begin{equation*}\label{lambda_b:first:example}
\Lambda_b=
\sum\limits_{r=3}^s\mu_{r}\operatorname{Id}_{\mathfrak{m}_{1r}\oplus\mathfrak{m}_{2r}}
+
\sum\limits_{3\leq i<j\leq s}\mu_{ij}\operatorname{Id}_{\mathfrak{m}_{ij}},
\end{equation*}
with $\mu_r>0$, $r=3,\ldots,s$, and $\mu_{ij}>0$, $3\leq i<j\leq s$.
On the other hand, $\mathfrak{m}_{12}$ is $\operatorname{Ad}(K)$-irreducible,
so any metric operator $\Lambda_f:\mathfrak{p}\to\mathfrak{p}$ on $H/K$ has
the form
\[
\Lambda_f=\lambda \operatorname{Id}_{\mathfrak{m}_{12}},
\ \lambda >0.
\]
Using the relations \eqref{3:equations}, we obtain that
$X\in\mathfrak{p}\oplus\mathfrak{n}$ is $\pi$-equigeodesic with respect to
$\Lambda_b$ if and only if
\begin{align}
&X_{12}X_{2r}=0,\ r=3,...,s, \label{flag:condition:X12X2r}\\
&X_{12}^*X_{1r}=0,\ r=3,\ldots,s, \label{flag:condition:X12starX1r}\\
&\sum\limits_{k=3}^{r-1}(\mu_{kr}-\mu_k)X_{ik}X_{kr}
+
\sum\limits_{k=r+1}^{s}(\mu_k-\mu_{rk})X_{ik}X_{rk}^*=0,
\ i=1,2,\ r=3,\ldots,s, \label{flag:condition:ir}\\
&\begin{aligned}
&
(\mu_i-\mu_j)(X_{1i}^*X_{1j}+X_{2i}^*X_{2j})
+
\sum\limits_{k=3}^{i-1}(\mu_{ki}-\mu_{kj})X_{ki}^*X_{kj}
\\
&+
\sum\limits_{k=i+1}^{j-1}(\mu_{kj}-\mu_{ik})X_{ik}X_{kj}
+
\sum\limits_{k=j+1}^s(\mu_{ik}-\mu_{jk})X_{ik}X_{jk}^*=0,\ 3\leq i<j\leq s.
\end{aligned} \label{flag:condition:ij}
\end{align}
\begin{remark} It is worth making the following observations:
\begin{itemize}
    \item[$a)$] Equations \eqref{flag:condition:X12X2r} and
    \eqref{flag:condition:X12starX1r} are equivalent to $[X_{\mathfrak{p}},X_{\mathfrak{n}}]=0$ while equations \eqref{flag:condition:ir} and \eqref{flag:condition:ij} are equivalent to $[X_{\mathfrak{n}},\Lambda_bX_{\mathfrak{n}}]_{\mathfrak{n}}=0,$ which means that $X_{\mathfrak{n}}$ is a geodesic vector on $(G/H,\Lambda_b).$ Since $\Lambda_f$ is a multiple of the identity on $\mathfrak{p},$ every
    vector in $\mathfrak{p}$ is equigeodesic on the fiber $H/K.$ Hence,
    in this example, a vector $X\in\mathfrak{p}\oplus\mathfrak{n}$ is
    $\pi$-equigeodesic with respect to $\Lambda_b$ if and only if
    $X_{\mathfrak{n}}$ is a geodesic vector on $(G/H,\Lambda_b)$ and $[X_{\mathfrak p},X_{\mathfrak n}]=0.$ In other words, the criterion produces $\pi$-equigeodesic vectors on $G/K$ by combining an equigeodesic vector on the fiber with a geodesic vector on the base, under the additional condition that the two components commute.

        \item[$b)$] A special case is obtained when $\Lambda_b$ is associated with the normal metric on $G/H$ induced by the inner product \eqref{inner:product:example:1}. In this case, all the parameters $\mu_r$ and $\mu_{ij}$ are equal. Hence \eqref{flag:condition:ir} and \eqref{flag:condition:ij} vanish identically and the equations \eqref{flag:condition:X12X2r}-\eqref{flag:condition:ij} reduce to 
        \[
        X_{12}X_{2r}=0,\ X_{12}^*X_{1r}=0,\ r=3,...,s.
        \]
        Thus, for any choice of $X_{12}$, the matrices $X_{2r}$ may be chosen with columns in $\ker X_{12}$, while the matrices $X_{1r}$ may be chosen with columns in $\ker X_{12}^*$. The components $X_{ij}$, $3\le i<j\le s$, are arbitrary in this case. Therefore, whenever these kernels are non-zero, we have $\pi$-equigeodesic vectors with several non-zero components. 
\end{itemize}
\end{remark}
\subsection{Flags of $\operatorname{SO}(4)$}
In this subsection, we consider the Lie groups
\begin{align*}
    &G=\operatorname{SO}(4),\\
    &H=\operatorname{S}(\operatorname{O}(2)\times \operatorname{O}(1)\times \operatorname{O}(1))\ \textnormal{and}\\
    &K=\operatorname{S}(\operatorname{O}(1)\times\operatorname{O}(1)\times \operatorname{O}(1)\times \operatorname{O}(1)).
\end{align*}
For $1\leq i<j\leq 4,$ let $\mathfrak{m}_{ij}:=\operatorname{span}\{A_{ij}\},$ where $A_{ij}$ is the skew-symmetric $4\times 4$ matrix with $1$ in the $(i,j)$-entry, $-1$ in the $(j,i)$-entry and zero elsewhere. The Lie algebras of $G,$ $H$ and $K$ are $\mathfrak{g}=\mathfrak{so}(4)=\{X\in M_4(\mathbb{R}):X+X^*=0\},\ \mathfrak{h}=\mathfrak{m}_{12}\ \textnormal{and}\ \mathfrak{k}=\{0\},$ respectively. With respect to the $\operatorname{Ad}(G)$-invariant inner product 
\[
(X,Y):=-\frac{1}{2}\operatorname{Tr}(XY),\ X,Y\in\mathfrak{so}(4),
\]
we have the orthogonal decompositions
\[
\mathfrak{g}=\mathfrak{h}\oplus\mathfrak{n},\ \mathfrak{h}=\mathfrak{k}\oplus\mathfrak{p},
\]
where 
\[
\mathfrak{p}=\mathfrak{m}_{12},\ \mathfrak{n}=\mathfrak{m}_{34}\oplus(\mathfrak{m}_{13}\oplus\mathfrak{m}_{23})\oplus(\mathfrak{m}_{14}\oplus\mathfrak{m}_{24}).
\]
We have that $\mathfrak{p}$ is $\operatorname{Ad}(K)$-invariant and irreducible, while the spaces $\mathfrak{m}_{34},\mathfrak{m}_{13}\oplus\mathfrak{m}_{23}$ and $\mathfrak{m}_{14}\oplus\mathfrak{m}_{24}$ are $\operatorname{Ad}(H)$-invariant and irreducible. Up to this point, the setting is analogous to the families of flag manifolds described in the previous section; however, in this case, the $H$-modules $\mathfrak{m}_{13}\oplus\mathfrak{m}_{23}$ and $\mathfrak{m}_{14}\oplus\mathfrak{m}_{24}$ are equivalent (see \cite[Section 5.1]{PSM} or \cite[Section 3.1]{GGN}) and $\operatorname{Hom}_K(\mathfrak{n},\mathfrak{p})\neq 0.$ In fact,
\[
\operatorname{Hom}_K(\mathfrak{n},\mathfrak{p})\cong\bigoplus\limits_{\begin{subarray}{c}1\leq i<j\leq 4\\(i,j)\neq (1,2)\end{subarray}}\operatorname{Hom}_{K}(\mathfrak{m}_{ij},\mathfrak{m}_{12})\cong \operatorname{Hom}_K(\mathfrak{m}_{34},\mathfrak{m}_{12}),
\]
where the last isomorphism holds because $\mathfrak{m}_{34}$ is the only $K$-module equivalent to $\mathfrak{m}_{12}$ \cite[Section 5.1]{PSM}. Therefore, each $L\in\operatorname{Hom}_K(\mathfrak{n},\mathfrak{p})$ is of the form
\[
L\left(\sum\limits_{\begin{subarray}{c}1\leq i<j\leq 4\\(i,j)\neq(1,2)\end{subarray}}x_{ij}A_{ij}\right)=\ell x_{34}A_{12},
\]
for some $\ell\in\mathbb{R}.$ By \cite[Proposition~3.3]{GGN}, every metric operator $\Lambda_b:\mathfrak{n}\to\mathfrak{n}$ has the form
\begin{align*}
\Lambda_b\left(\sum\limits_{\begin{subarray}{c}1\leq i<j\leq 4\\(i,j)\neq(1,2)\end{subarray}}x_{ij}A_{ij}\right)=&\,\mu_{3}(x_{13}A_{13}+x_{23}A_{23})+\mu_4(x_{14}A_{14}+x_{24}A_{24})+\mu_{34}x_{34}A_{34}\\
&+b(x_{24}A_{13}+x_{13}A_{24}-x_{14}A_{23}-x_{23}A_{14})
\end{align*}
for some $\mu_3,\mu_4,\mu_{34}>0$ and $b\in\mathbb{R}$ such that $b^2< \mu_3\mu_4.$ Since $\mathfrak{p}$ is one-dimensional, every metric operator $\Lambda_f:\mathfrak{p}\to\mathfrak{p}$ on the fiber has the form 
\[
\Lambda_f(x_{12}A_{12})=\lambda x_{12}A_{12},\ \lambda>0.
\]
For a given $X=\sum\limits_{1\leq i<j\leq 4}x_{ij}A_{ij}$, we have
\[
X_{\mathfrak{p}}=x_{12}A_{12}\ \textnormal{and}\ X_{\mathfrak{n}}=x_{13}A_{13}+x_{23}A_{23}
+x_{14}A_{14}+x_{24}A_{24}
+x_{34}A_{34}.
\]
A direct substitution in \eqref{7:equations} gives that $X$ is $\pi$-equigeodesic with respect to $\Lambda_b$ if and only if 
\begin{equation}\label{pi-equigeodesic:SO(4)}
\begin{aligned}
&x_{12}x_{13}=x_{12}x_{23}=x_{12}x_{14}=x_{12}x_{24}=0,\\
&x_{34}x_{13}=x_{34}x_{23}=x_{34}x_{14}=x_{34}x_{24}=0,\\
&(\mu_3-\mu_4)(x_{13}x_{14}+x_{23}x_{24})=0.
\end{aligned}
\end{equation}
\begin{remark}
The previous equations show that the notion of $\pi$-equigeodesic vector is strictly weaker than the notion of equigeodesic vector on $G/K$. For instance, the vector $X=A_{13}+A_{23}$ satisfies \eqref{pi-equigeodesic:SO(4)} and hence is $\pi$-equigeodesic with respect to any $\Lambda_b.$ However, it is not equigeodesic on $G/K$. In fact, there exists a $K$-invariant metric operator $\Lambda:\mathfrak{so}(4)\to\mathfrak{so}(4)$ such that
\[
\Lambda A_{13}=\alpha A_{13},\ \Lambda A_{23}=\beta A_{23},
\]
with $\alpha,\beta>0$ and $\alpha\neq\beta.$ Since
\[
[X,\Lambda X]=[A_{13}+A_{23},\alpha A_{13}+\beta A_{23}]=(\alpha-\beta)A_{12}\neq0,
\]
then $X$ is not equigeodesic.
\end{remark}

\subsection{Ledger-Obata spaces} Let $S$ be a compact connected simple Lie group with Lie algebra
$\mathfrak{s}$ and let $r\geq 2$. We consider
\begin{align*}
&G=S^{r+1},\\
&H=\{(a_1,...,a_r,a_r)\in S^{r+1}:a_i\in S\}, \textnormal{and}\\
&K=\Delta S=\{(a,...,a)\in S^{r+1}:a\in S\}.
\end{align*}

Then $K\subseteq H\subseteq G$ and we have a homogeneous fibration
\[
H/K\longrightarrow G/K\xlongrightarrow{\pi}G/H.
\]
Here $G/K=S^{r+1}/\Delta S$ is a Ledger-Obata space (see \cite[Section~4]{LO}), $H/K\simeq S^r/\Delta S$ and $G/H\simeq S.$ If $Q$ is an $\operatorname{Ad}(S)$-invariant inner product on
$\mathfrak{s},$ then
\[
((A_1,...,A_{r+1}),(B_1,...,B_{r+1}))
:=\sum_{i=1}^{r+1}Q(A_i,B_i)
\]
is an $\operatorname{Ad}(G)$-invariant inner product on $\mathfrak{g}=\mathfrak{s}^{r+1}.$
The Lie algebras of $G$, $H$ and $K$ are given by $\mathfrak{g}=\mathfrak{s}^{r+1},\ \mathfrak{h}=\{(A_1,...,A_r,A_r)\in\mathfrak{s}^{r+1}:A_i\in\mathfrak{s}\}$ and $\mathfrak{k}=\{(A,...,A)\in\mathfrak{s}^{r+1}:A\in\mathfrak{s}\},$ respectively. With respect to the inner product above, we have the orthogonal decompositions
\[
\mathfrak g=\mathfrak h\oplus\mathfrak n,\ \mathfrak h=\mathfrak k\oplus\mathfrak p,
\]
where
\begin{align*}
    \mathfrak{n}=\{N_B:=(0,...,0,B,-B)\in \mathfrak{s}^{r+1}:B\in\mathfrak{s}\}.
\end{align*}
For $i=1,...,r-1$ and $A\in\mathfrak{s}$, let
\[
P_i(A)=(0,...,0,A,0,...,0,-A/2,-A/2)\in\mathfrak{s}^{r+1},
\]
where $A$ is in the $i$-th entry. Then
\[
\mathfrak{p}=
\left\{
\sum_{i=1}^{r-1}P_i(A_i)\in\mathfrak{s}^{r+1}:A_i\in\mathfrak{s}
\right\}.
\]
Under the identification $N_B\leftrightarrow B,$ the $H$-module $\mathfrak{n}$ is equivalent to the adjoint representation of $S$ on $\mathfrak{s}.$ In fact, if $h=(a_1,...,a_r,a_r)\in H$, then
\[
\operatorname{Ad}(h)N_B=N_{\operatorname{Ad}(a_r)B}.
\]
Since $a_r\in S$ can be arbitrary and $\mathfrak{s}$ is simple, it follows that $\mathfrak{n}$ is irreducible as an $H$-module. On the other hand, $\mathfrak{p}$ decomposes as
\[
\mathfrak{p}=P_1(\mathfrak s)\oplus\cdots\oplus P_{r-1}(\mathfrak s),
\]
where each summand is irreducible and, as a $K$-module, is equivalent to $\mathfrak{n}.$ More precisely, if $k=(a,...,a)\in K$, then
\[
\operatorname{Ad}(k)N_B=N_{\operatorname{Ad}(a)B}\ \textnormal{and}\ \operatorname{Ad}(k)P_i(A)=P_i(\operatorname{Ad}(a)A).
\]
Thus, the map $N_B\mapsto P_i(B)$ is a $K$-equivariant isomorphism from $\mathfrak{n}$ onto $P_i(\mathfrak{s}).$ Therefore, since the adjoint representation of $S$ on $\mathfrak{s}$ is orthogonal,
\[
\operatorname{Hom}_K(\mathfrak n,P_i(\mathfrak s))\cong\mathbb R,
\ i=1,...,r-1.
\]
Therefore,
\[
\operatorname{Hom}_K(\mathfrak n,\mathfrak p)
\cong
\bigoplus\limits_{i=1}^{r-1}
\operatorname{Hom}_K(\mathfrak n,P_i(\mathfrak s))
\cong
\mathbb R^{r-1}.
\]
This means that each $L\in\operatorname{Hom}_K(\mathfrak n,\mathfrak p)$ is of the form
\[
L(N_B)=\sum_{i=1}^{r-1}\ell_iP_i(B),\ \ell_1,...,\ell_{r-1}\in\mathbb{R}.
\]
The irreducibility of $\mathfrak{n}$ as an $H$-module implies that every metric operator $\Lambda_b:\mathfrak{n}\to\mathfrak{n}$ on the base has the form
\[
\Lambda_bN_B=\mu N_B,\ \mu>0.
\]
Since $\mathfrak{p}$ is a direct sum of $r-1$ pairwise equivalent irreducible $K$-modules, all metric operators $\Lambda_f:\mathfrak{p}\to\mathfrak{p}$ are of the form
\[
\Lambda_f\left(\sum_{i=1}^{r-1}P_i(A_i)\right)=\sum_{i,j=1}^{r-1}a_{ij}P_i(A_j),
\]
where the coefficients $a_{ij}$ are such that the operator defined above is self-adjoint and positive with respect to the restriction of $(\cdot,\cdot)$ to $\mathfrak{p}.$\\

Before characterizing the $\pi$-equigeodesic vectors on $G/K$ with respect to $\Lambda_b,$ we list the following useful relations, which hold for all $A,B,C\in\mathfrak{s}:$
\begin{align}
    &[P_i(A),N_B]=\frac{1}{2}N_{[B,A]},\label{[P_i,N]}\\
    &[N_B,N_C]=(0,...,0,[B,C],[B,C])\in\mathfrak{h},\label{[N,N]}\\
    &[N_B,N_C]_{\mathfrak{p}}=0,\label{[N,N]:2}\\
    &[A,B]=0\Longrightarrow [P_i(A),P_j(B)]=0,\ i,j=1,...,r-1. \label{[P_i,P_j]}
\end{align}
\begin{proposition}
Let $X=X_{\mathfrak{p}}+X_{\mathfrak{n}}\in\mathfrak{p}\oplus\mathfrak{n},$ where
\[
X_{\mathfrak p}=\sum_{i=1}^{r-1}P_i(A_i),\  X_{\mathfrak n}=N_B,
\]
and $A_1,...,A_{r-1},B\in\mathfrak{s}.$ Then $X$ is a $\pi$-equigeodesic vector with respect to any metric operator $\Lambda_b$ on $G/H$ if and only if
\begin{align}\label{pi-equigeodesic:Ledger-Obata}
\begin{split}
&[A_i,A_j]=0,\ 1\leq i<j\leq r-1,\\
&[A_k,B]=0,\ k=1,...,r-1.
\end{split}
\end{align}
\end{proposition}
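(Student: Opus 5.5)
The plan is to apply Theorem~\ref{theorem:pi-equigeodesic:criterion}, with the base metric operator $\Lambda_b=\mu\operatorname{Id}_{\mathfrak n}$, the maps $L(N_B)=\sum_i\ell_iP_i(B)$, and the general fiber operators $\Lambda_f(\sum_iP_i(A_i))=\sum_{i,j}a_{ij}P_i(A_j)$. The point to exploit is that all of these operators act as scalar matrices on the $\mathfrak s$-labels. I will first record the Gram relations
\[
(P_i(A),P_j(C))=(\delta_{ij}+\tfrac12)Q(A,C),\qquad (N_B,N_C)=2Q(B,C).
\]
From these, $L^*P_j(A)=c_jN_A$ for scalars $c_j$ depending only on $\ell_1,\dots,\ell_{r-1}$. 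Consequently
\[
\Lambda_fX_\mathfrak p=\sum_iP_i(C_i),\quad \Lambda_fLX_\mathfrak n=\sum_iP_i(\beta_iB),\quad L^*\Lambda_fX_\mathfrak p=N_D,\quad L^*\Lambda_fLX_\mathfrak n=N_{\gamma B},
\]
where the $C_i$ and $D$ are real linear combinations of $A_1,\dots,A_{r-1}$, and $\beta_i,\gamma$ are scalars.

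\emph{Sufficiency.} Assume \eqref{pi-equigeodesic:Ledger-Obata}. Every label appearing above then lies in the abelian subspace spanned by $A_1,\dots,A_{r-1},B$. By \eqref{[P_i,P_j]} every bracket of two $P$'s vanishes, and by \eqref{[P_i,N]} every bracket $[P_i(\cdot),N_\cdot]$ vanishes. A bracket $[N_E,N_F]$ with commuting labels $E,F$ also vanishes by \eqref{[N,N]}. Hence each term in \eqref{7:equations} is zero; in the third equation this includes $[X_\mathfrak p,\mu X_\mathfrak n]$ and $[X_\mathfrak n,\mu X_\mathfrak n]_\mathfrak n$. This holds for every $\mu$, so $X$ is $\pi$-equigeodesic for every $\Lambda_b$.

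\emph{Necessity, condition $[A_k,B]=0$.} I use the condition $[X_\mathfrak n,\Lambda_fX_\mathfrak p]=0$ from \eqref{7:equations}. By \eqref{[P_i,N]},
\[
[N_B,\Lambda_fX_\mathfrak p]=-\tfrac12 N_{[B,\sum_iC_i]}.
\]
The $\Lambda_f$-self-adjoint matrices are exactly $a=\mathcal G^{-1}S$, with $\mathcal G=(\delta_{ij}+\frac12)$ and $S$ symmetric, and positivity is an open condition. Since the expression is linear in $a$, it vanishes for all symmetric $S$, not only positive ones. Now $\sum_iC_i=\sum_j(w^TS)_jA_j$ with $w=\mathcal G^{-1}(1,\dots,1)^T\neq0$. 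The row vector $w^TS$ sweeps all of $\mathbb R^{r-1}$ as $S$ ranges over symmetric matrices. Hence $[B,A_j]=0$ for every $j$.

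\emph{Necessity, condition $[A_i,A_j]=0$; this is the main obstacle.} I use $[X_\mathfrak p,\Lambda_fX_\mathfrak p]_\mathfrak p=0$ for all symmetric $S$. First compute
\[
[P_i(A),P_k(C)]=\bigl(\delta_{ik}[A,C]\text{ in slot } i,\ 0,\dots,\tfrac14[A,C],\tfrac14[A,C]\bigr),
\]
and its $\mathfrak p$-projection, which is done by subtracting the $\mathfrak k$-component (the average over the $r+1$ slots). Then take $S$ to be an elementary symmetric matrix $E_{jk}+E_{kj}$, or $E_{jj}$. The resulting element of $\mathfrak p$ is an explicit combination of the $P_m([A_i,A_k])$, which are independent in $\mathfrak p$. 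Reading off coefficients should isolate each $[A_j,A_k]$ with a nonzero scalar factor. The delicate part is the bookkeeping of $\mathcal G^{-1}$ and of the $\mathfrak k$-projection, to make sure no cancellation kills the coefficient. If the general $S$ becomes unwieldy, my fallback is to use only perturbations $\Lambda_f=\operatorname{Id}+\varepsilon\mathcal G^{-1}S$ with small $\varepsilon$, and to differentiate in $\varepsilon$.
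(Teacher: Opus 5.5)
Your sufficiency argument and your derivation of $[A_k,B]=0$ are correct. The latter is a more conceptual form of what the paper does. The paper compares the two metric operators $\operatorname{Id}_{\mathfrak p}$ and $\operatorname{Id}_{\mathfrak p}+R_i$. Since $\mathcal G^{-1}=I-\frac{1}{r+1}J$, with $J$ the all-ones matrix, $R_i$ is exactly your $\mathcal G^{-1}S$ with $S=E_{ii}$. You instead pass to the full linear span of metric operators at once. That is legitimate, because the positive definite $S$ span all symmetric matrices.

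The gap is the step you yourself flag as the main obstacle, $[A_i,A_j]=0$. It is only asserted (``should isolate'') and never carried out, and the mechanism you describe would not work as stated. The brackets $[A_i,A_k]$ need not be linearly independent in $\mathfrak s$, so there are no coefficients to read off. Moreover, each $P_m$-component of $[X_\mathfrak p,\Lambda_fX_\mathfrak p]_\mathfrak p$ equals $[A_m,C_m]-M$, with $\mathfrak k$-correction $M=\frac{1}{r+1}\bigl(\sum_l[A_l,C_l]+\frac12[\Sigma_A,\Sigma_C]\bigr)$, where $\Sigma_A=\sum_lA_l$ and $\Sigma_C=\sum_lC_l$. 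This term mixes in many brackets, so no single component isolates $[A_j,A_k]$.

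The idea you need is that the $\mathfrak p$-projection of an element of $\mathfrak h$ vanishes if and only if that element lies in $\mathfrak k$, i.e. all its slot entries coincide; then compare two slots. With $S=E_{jk}+E_{kj}$ you get $[A_j,C_j]=\frac{r}{r+1}[A_j,A_k]$ and $[A_k,C_k]=-\frac{r}{r+1}[A_j,A_k]$. Equality of these two slots forces $[A_j,A_k]=0$.

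The paper's choice is cleaner. Take $S=E_{ii}+E_{jj}-E_{ij}-E_{ji}$, which has zero column sums, so $\mathcal G^{-1}S=S$ and $\Sigma_C=0$. This is the operator $T_{ij}$, and $[X_\mathfrak p,T_{ij}X_\mathfrak p]$ already lies in $\mathfrak p$ with no $\mathfrak k$-correction: it equals $P_i(-[A_i,A_j])+P_j([A_i,A_j])$. Your fallback of differentiating $\operatorname{Id}+\varepsilon\mathcal G^{-1}S$ in $\varepsilon$ is just the linearity you already used, so it does not avoid this computation.
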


\begin{proof} Assume first that equations \eqref{pi-equigeodesic:Ledger-Obata} hold. Let $\Lambda_f$ be any metric operator on $\mathfrak{p}.$ Then
\[
\Lambda_fX_{\mathfrak p}=\sum_{i,j=1}^{r-1}a_{ij}P_i(A_j)=\sum\limits_{i=1}^{r-1}P_i\left(\sum\limits_{j=1}^{r-1}a_{ij}A_j\right)=\sum\limits_{i=1}^{r-1}P_i(C_i),
\]
where  $$C_i=\sum\limits_{j=1}^{r-1}a_{ij}A_j,\ i=1,...,r-1.$$ Since each $C_i$ is a linear combination of $A_1,...,A_{r-1},$ it follows from \eqref{pi-equigeodesic:Ledger-Obata} that
\[
[A_i,C_j]=[B,C_j]=0,\ i,j=1,...,r-1.
\]
Therefore
\begin{align*}
    [X_\mathfrak{p},\Lambda_fX_\mathfrak{p}]&=\sum\limits_{i,j=1}^{r-1}[P_i(A_i),P_j(C_j)]=0,
\end{align*}
where the last equality follows from \eqref{[P_i,P_j]}. Moreover, using \eqref{[P_i,N]}, we obtain
\begin{align*}
    [X_\mathfrak{n},\Lambda_fX_\mathfrak{p}]&=\sum\limits_{i=1}^{r-1}[N_B,P_i(C_i)]=-\frac{1}{2}\sum\limits_{i=1}^{r-1}N_{[B,C_i]}=0.
\end{align*} Now let $L\in\operatorname{Hom}_K(\mathfrak n,\mathfrak p).$ Then
\[
LN_B=\sum_{i=1}^{r-1}\ell_iP_i(B)
\]
for some $\ell_1,...,\ell_{r-1}\in\mathbb R.$ Therefore
\[
\Lambda_fLX_{\mathfrak n}=\sum_{i=1}^{r-1}P_i(D_i),
\]
where $$D_i=\left(\sum\limits_{j=1}^{r-1}a_{ij}\ell_j\right)B,\ i=1,...,r-1.$$ 
We claim that, for a given
\[
Y=\sum\limits_{i=1}^{r-1}P_i(Y_i)\in\mathfrak{p},
\]
we have $L^*Y=N_Z,$ where $Z$ is a linear combination of $Y_1,...,Y_{r-1}.$ Indeed, since $L^*Y\in\mathfrak n,$ there exists $Z\in\mathfrak s$ such that $L^*Y=N_Z.$ This element is determined by
\[
(LN_W,Y)=(N_W,N_Z),\ \textnormal{for all }W\in\mathfrak{s}.
\]
Now, $(N_W,N_Z)=2Q(W,Z)=Q(W,2Z),$ while
\begin{align*}
    (LN_W,Y)&=\left(\sum\limits_{i=1}^{r-1}\ell_iP_i(W),\sum\limits_{j=1}^{r-1}P_j(Y_j)\right)\\[0.5em]
    &=\sum\limits_{i,j=1}^{r-1}\ell_i(P_i(W),P_j(Y_j))\\[0.5em]
    &=\sum\limits_{i=1}^{r-1}\ell_i(P_i(W),P_i(Y_i))+\sum\limits_{\begin{subarray}{c}i,j=1\\i\neq j\end{subarray}}^{r-1}\ell_i(P_i(W),P_j(Y_j))\\[0.5em]
    &=\frac{3}{2}\sum\limits_{i=1}^{r-1}\ell_iQ(W,Y_i)+\frac{1}{2}\sum\limits_{\begin{subarray}{c}i,j=1\\i\neq j\end{subarray}}^{r-1}\ell_iQ(W,Y_j)\\[0.5em]
    &=Q\left(W,\frac{3}{2}\sum\limits_{i=1}^{r-1}\ell_iY_i+\frac{1}{2}\sum\limits_{\begin{subarray}{c}i,j=1\\i\neq j\end{subarray}}^{r-1}\ell_iY_j\right).
\end{align*}
Therefore,
\[
Z=\frac{3}{4}\sum\limits_{i=1}^{r-1}\ell_iY_i+\frac{1}{4}\sum\limits_{\begin{subarray}{c}i,j=1\\i\neq j\end{subarray}}^{r-1}\ell_iY_j,
\]
which is a linear combination of $Y_1,...,Y_{r-1}.$ In particular, $L^*\Lambda_fX_{\mathfrak p}$ is of the form $N_E,$ where $E$ is a real linear combination of $C_1,...,C_{r-1},$ and therefore also of $A_1,...,A_{r-1}.$ Moreover, $L^*\Lambda_fLX_{\mathfrak{n}}$ is of the form $N_F,$ where $F$ is a real linear combination of $D_1,...,D_{r-1}.$ Since the vectors $D_i$ are all  multiples of $B,$ it follows that $F=\eta B$ for some $\eta\in\mathbb{R}.$ Computing the remaining brackets in \eqref{7:equations}, we obtain
\begin{align*}
    &[X_\mathfrak{n},L^*\Lambda_fLX_\mathfrak{n}]_\mathfrak{p}
    =[N_B,N_{\eta B}]_{\mathfrak{p}}=0,\\
    &[X_\mathfrak{p},\Lambda_fLX_\mathfrak{n}]_\mathfrak{p}+[X_\mathfrak{n},L^*\Lambda_fX_\mathfrak{p}]_\mathfrak{p}=\sum\limits_{i,j=1}^{r-1}\left[P_i(A_i),P_j(D_j)\right]_{\mathfrak p}
    +[N_B,N_E]_{\mathfrak p}=0,\\
    &[X_\mathfrak{p},\Lambda_bX_\mathfrak{n}]+[X_\mathfrak{n},\Lambda_bX_\mathfrak{n}]_\mathfrak{n}=
    \mu\sum\limits_{i=1}^{r-1}[P_i(A_i),N_B]+[N_B,\mu N_B]_\mathfrak n
    =
    \frac{\mu}{2}\sum\limits_{i=1}^{r-1}N_{[B,A_i]}=0,\\
    &[X_\mathfrak{n},\Lambda_fLX_\mathfrak{n}]
    +[X_\mathfrak{p},L^*\Lambda_fX_\mathfrak{p}]
    +[X_\mathfrak{n},L^*\Lambda_fX_\mathfrak{p}]_\mathfrak{n}\\
    &\hspace{2em}=
    \sum\limits_{i=1}^{r-1}\left[N_B,P_i(D_i)\right]
    +\sum\limits_{i=1}^{r-1}\left[P_i(A_i),N_E\right]
    +
    [N_B,N_E]_\mathfrak n\\
    &\hspace{2em}=-\frac{1}{2}\sum\limits_{i=1}^{r-1}N_{[B,D_i]}+\frac{1}{2}\sum\limits_{i=1}^{r-1}N_{[E,A_i]}=0,\\
    &[X_\mathfrak{p},L^*\Lambda_fLX_\mathfrak{n}]
    +[X_\mathfrak{n},L^*\Lambda_fLX_\mathfrak{n}]_\mathfrak{n}\\
    &\hspace{2em}=
    \sum\limits_{i=1}^{r-1}\left[P_i(A_i),N_{\eta B}\right]+[N_B,N_{\eta B}]_\mathfrak n
    =
    \frac{\eta}{2}\sum\limits_{i=1}^{r-1}N_{[B,A_i]}=0.
\end{align*}
Here we used \eqref{[P_i,N]}-\eqref{[P_i,P_j]}, together with the facts that each $D_i$ is a multiple of $B$ and that $E$ is a linear combination of $A_1,...,A_{r-1}.$ Thus $X$ is a $\pi$-equigeodesic vector with respect to any $\Lambda_b.$\\

Conversely, assume that $X$ is a $\pi$-equigeodesic vector with respect to a metric operator $\Lambda_b.$ By
\eqref{7:equations}, for every metric operator $\Lambda_f$ on $\mathfrak p$,
we have in particular
\[
[X_{\mathfrak p},\Lambda_fX_{\mathfrak p}]_{\mathfrak p}=[X_{\mathfrak n},\Lambda_fX_{\mathfrak p}]=0.
\]
We will show that these two conditions force relations \eqref{pi-equigeodesic:Ledger-Obata}. Fix $1\leq i<j\leq r-1$ and define the linear map $T_{ij}:\mathfrak p\to\mathfrak p$ by
\[
T_{ij}\left(\sum_{k=1}^{r-1}P_k(Z_k)\right)=P_i(Z_i-Z_j)-P_j(Z_i-Z_j).
\]
For each $\kappa=(a,...,a)\in K,$ we have 
\begin{align*}
    T_{ij}\left(\operatorname{Ad}(\kappa)\left(\sum_{k=1}^{r-1}P_k(Z_k)\right)\right)&=T_{ij}\left(\sum\limits_{k=1}^{r-1}\operatorname{Ad}(\kappa)\left(P_k(Z_k)\right)\right)\\[0.5em]
    &=T_{ij}\left(\sum\limits_{k=1}^{r-1}P_k\left(\operatorname{Ad}(a)Z_k\right)\right)\\[0.5em]
    &=P_i\left(\operatorname{Ad}(a)(Z_i-Z_j)\right)-P_j\left(\operatorname{Ad}(a)(Z_i-Z_j)\right)\\[0.5em]
    &=\operatorname{Ad}(\kappa)P_i(Z_i-Z_j)-\operatorname{Ad}(\kappa)P_j(Z_i-Z_j)\\[0.5em]
    &=\operatorname{Ad}(\kappa)T_{ij}\left(\sum_{k=1}^{r-1}P_k(Z_k)\right).
\end{align*}
Therefore, $T_{ij}$ is $K$-equivariant. Also, for
$Z=\sum_{k=1}^{r-1}P_k(Z_k), W=\sum_{k=1}^{r-1}P_k(W_k)\in\mathfrak{p},$ one has
\[
(T_{ij}Z,W)=Q(Z_i-Z_j,W_i-W_j)=(Z,T_{ij}W),
\]
and in particular
\[
(T_{ij}Z,Z)=Q(Z_i-Z_j,Z_i-Z_j)\geq 0.
\]
Therefore, the map $\Lambda^{ij}_f:=\operatorname{Id}_{\mathfrak{p}}+T_{ij}$ is a metric operator on $\mathfrak{p}.$ For this $\Lambda^{ij}_f,$ the condition $[X_{\mathfrak{p}},\Lambda^{ij}_fX_{\mathfrak p}]_{\mathfrak{p}}=0$ implies 
\[
[X_{\mathfrak{p}},T_{ij}X_{\mathfrak{p}}]_{\mathfrak{p}}=0.
\]
Now
\begin{align*}
0=[X_{\mathfrak{p}},T_{ij}X_{\mathfrak{p}}]_{\mathfrak{p}}
=&\sum\limits_{k=1}^{r-1}[P_k(A_k),P_i(A_i-A_j)-P_j(A_i-A_j)]_\mathfrak{p}\\[0.5em]
=&(0,...,0,
\underset{\textnormal{$i$-th}}{[A_i,A_i-A_j]},
0,...,0,
\underset{\textnormal{$j$-th}}{-[A_j,A_i-A_j]},
0,...,0)_\mathfrak{p}\\[0.5em]
=&(0,...,0,
\underset{\textnormal{$i$-th}}{-[A_i,A_j]},
0,...,0,
\underset{\textnormal{$j$-th}}{[A_i,A_j]},
0,...,0)_\mathfrak{p}\\[0.5em]
=&P_i(-[A_i,A_j])+P_j([A_i,A_j]).
\end{align*}
Therefore, $[A_i,A_j]=0.$ It remains to prove that $[A_i,B]=0,\ i=1,...,r-1.$
For any
\[
Y=\sum_{j=1}^{r-1}P_j(C_j)\in\mathfrak p,
\]
we have
\[
[N_B,Y]=-\frac{1}{2}N_{\sum_{j=1}^{r-1}[B,C_j]}.
\]
Therefore, the condition $[X_{\mathfrak n},\Lambda_fX_{\mathfrak p}]=0$
is equivalent to
\begin{equation}\label{auxiliar:1}
\left[B,\sum_{j=1}^{r-1}C_j\right]=0,\ \textnormal{where}\
\Lambda_fX_{\mathfrak p}=\sum_{j=1}^{r-1}P_j(C_j).
\end{equation}
For $i\in\{1,...,r-1\},$ define $R_i:\mathfrak{p}\to\mathfrak{p}$ by 
\[
R_i\left(\sum\limits_{j=1}^{r-1}P_j(Z_j)\right):=\frac{r}{r+1}P_i(Z_i)-\frac{1}{r+1}\sum\limits_{\begin{subarray}{c}j=1\\j\neq i\end{subarray}}^{r-1}P_j(Z_i).
\]
Given $\kappa=(a,...,a)\in K$ and $U\in\mathfrak{s},$ we have $\operatorname{Ad}(\kappa)P_j(U)=P_j(\operatorname{Ad}(a)U),$ for all $j\in\{1,...,r-1\}.$ Therefore, each $R_i$ is $K$-equivariant. Moreover, for $Z=\sum_{j=1}^{r-1}P_j(Z_j)$ and $W=\sum_{j=1}^{r-1}P_j(W_j)$ we have 
\[
(R_iZ,W)=Q(Z_i,W_i)=(Z,R_iW).
\]
Thus, $R_i$ is self-adjoint and nonnegative with respect to $(\cdot,\cdot)\big{|}_{\mathfrak{p}\times\mathfrak{p}}.$ This implies that the map $\Lambda_f^i:=\operatorname{Id}_{\mathfrak{p}}+R_i$ is a metric operator on $\mathfrak{p}.$ By \eqref{auxiliar:1}, the conditions $[X_\mathfrak{n},\operatorname{Id}_\mathfrak{p}X_{\mathfrak{p}}]=0$ and $[X_\mathfrak{n},\Lambda_f^iX_{\mathfrak{p}}]=0$ are equivalent to
\[
\left[B,\sum\limits_{j=1}^{r-1}A_j\right]=0
\]
and
\[
\left[B,\sum\limits_{j=1}^{r-1}A_j-\frac{1}{r+1}\sum\limits_{\begin{subarray}{c}j=1\\j\neq i\end{subarray}}^{r-1}A_i+\frac{r}{r+1}A_i\right]=0,
\]
respectively. Subtracting the first equality from the second one, we obtain
\[
0=\left[B,-\frac{1}{r+1}\sum\limits_{\begin{subarray}{c}j=1\\j\neq i\end{subarray}}^{r-1}A_i+\frac{r}{r+1}A_i\right]
=
\left[B,\left(-\frac{r-2}{r+1}+\frac{r}{r+1}\right)A_i\right]
=
\frac{2}{r+1}[B,A_i].
\]
Therefore, $[B,A_i]=0.$ Since $i$ was arbitrary, we get $[B,A_i]=0,\ i=1,...,r-1.$ This completes the proof.
\end{proof}
\begin{remark} Equations \eqref{pi-equigeodesic:Ledger-Obata} show that $\pi$-equigeodesic vectors in this family are obtained by choosing all components in a common abelian subalgebra of $\mathfrak{s}.$ In particular, if $\mathfrak{t}\subseteq\mathfrak{s}$ is a Cartan subalgebra and $A_1,...,A_{r-1},B\in\mathfrak{t},$ then
\[
X=\sum_{i=1}^{r-1}P_i(A_i)+N_B
\]
is a $\pi$-equigeodesic vector with respect to any metric on the base.
\end{remark}

\section*{Acknowledgements}
Lino Grama is partially supported by FAPESP grants no. 2021/04065-6, and CNPq grant no. 306021/2024-2.

\bibliographystyle{apa}

\end{document}